\def\vertex{\circle*{1.8}}
\newtheorem{theorem}{Theorem}[section]
\newtheorem{lemma}[theorem]{Lemma}
\newenvironment{proof}[1][]%
{\noindent {\setcounter{equation}{0}\it Proof.
}{#1}{}}{\hfill$\Box$\vspace{2ex}}
\def\longbox#1{\parbox{0.85\textwidth}{#1}}
\begin{document}

\title{Maximum Weight Stable Set in  ($P_7$, bull)-free graphs and
($S_{1,2,3}$, bull)-free graphs}

\author{%
Fr\'ed\'eric Maffray\thanks{CNRS, Laboratoire G-SCOP, University of
Grenoble-Alpes, France.}
           \and%
Lucas Pastor\thanks{Laboratoire G-SCOP, University of Grenoble-Alpes,
France.  \newline The authors are partially supported by ANR project
STINT (reference ANR-13-BS02-0007).} }

\date{\today}

\maketitle

\begin{abstract}
We give a polynomial time algorithm that finds the maximum weight
stable set in a graph that does not contain an induced path on seven
vertices or a bull (the graph with vertices $a, b, c, d, e$ and edges
$ab, bc, cd, be, ce$). With the same arguments with also give a polynomial
algorithm for any graph that does not contain $S_{1,2,3}$ or a bull.

\medskip

\noindent \textit{Keywords}: maximum weight stable set problem,
polynomial algorithm, ($P_7$, bull)-free, ($S_{1,2,3}$, bull)-free
\end{abstract}

\section{Introduction}

In a graph $G$, a \emph{stable set} (or \emph{independent set}) is a
subset of pairwise non-adjacent vertices.  The \emph{Maximum Stable
Set problem} (shortened as MSS) is the problem of finding a stable set
of maximum cardinality.  In the weighted version, let $w : V(G)
\rightarrow \mathbb{N}$ be the weight function over the set of
vertices.  The weight of any subset of vertices is defined as the
sum of the weight of all its elements.  The \emph{Maximum Weight
Stable Set problem} (shortened as MWSS) is the problem of finding a
stable set of maximum weight.  It is known that MSS and MWSS are
NP-hard in general \cite{GarJoh}.

Given a set of graphs $\mathcal{F}$, a graph $G$ is
\emph{$\mathcal{F}$-free} if no induced subgraph of $G$ is isomorphic
to a member of $\mathcal{F}$.  If $\mathcal{F}$ is composed of only
one element $F$, we say that $G$ is $F$-free.  On the other hand, we
say that $G$ \emph{contains} $F$ when $F$ is isomorphic to an induced
subgraph of $G$.  For any integer $k$, we let $P_k$, $C_k$ and $K_k$
denote respectively the chordless path on $k$ vertices, the chordless
cycle on $k$ vertices, and the complete graph on $k$ vertices.  The
\emph{claw} is the graph with four vertices $a,x,y,z$ and three edges
$ax,ay,az$.  Let $S_{i, j, k}$ be the graph obtained from a claw by
subdividing its edges into respectively $i$, $j$ and $k$ edges.  Let
us say that a graph is \emph{special} if every component of the graph
is a path or an $S_{i,j,k}$ for any $i,j,k$.

\begin{itemize}\itemsep0pt
\item
Alekseev \cite{A} proved that MSS remains NP-hard in the class of
$\mathcal{F}$-free graphs whenever $\mathcal{F}$ is a finite set of
graphs such that no member of $\mathcal{F}$ is special.

\item
Several authors \cite{FOS, M, NT, NS, S} proved that MWSS can be
solved in polynomial time for claw-free graphs ($S_{1, 1, 1}$-free
graphs).

\item
Lozin and Milani\v{c}~\cite{LM} proved that MWSS can be solved in
polynomial time for fork-free graphs ($S_{1, 1, 2}$-free graphs).

\item
Lokshtanov, Vatshelle and Villager~\cite{LVV} proved that MWSS can be
solved in polynomial time for $P_5$-free graphs ($S_{0, 2, 2}$-free
graphs).
\end{itemize}
The results above settle the complexity of MWSS in $F$-free graph
whenever $F$ is a connected special graph on at most five vertices.
Therefore the new frontier to explore is when the forbidden induced
subgraph has six or more vertices.  There are several results on the
existence of a polynomial time algorithm for MWSS in subclasses of
$P_6$-free graphs \cite{K, KM, MP, RM1999, RM2009, RM2013}.  Mosca
\cite{RM2008} proved that MWSS is solvable in polynomial for the class
of ($P_7$, banner)-free graphs.  Brandst\"adt and Mosca~\cite{BM}
proved that there exists a polynomial time algorithm for the MWSS
problem in the class of ($P_7$, $K_3$)-free graphs.  The \emph{bull}
is the graph with vertices $a, b, c, d, e$ and edges $ab, bc, cd, be,
ce$ (see Figure~\ref{fig:bull}).  Our main results are the following
two theorems.
\begin{theorem}\label{thm:P7}
The Maximum Weight Stable Set problem can be solved in polynomial time
in the class of ($P_7$, bull)-free graphs.
\end{theorem}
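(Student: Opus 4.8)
The plan is to combine two standard reductions with a structural analysis of the remaining ``indecomposable'' graphs. First I would use \emph{modular decomposition}: if $G$ has a nontrivial module $M$, then solving MWSS on $G$ reduces in polynomial time to solving it on $G[M]$ and on the quotient graph obtained by contracting $M$ to a single vertex weighted by the optimum on $G[M]$; since the class of bull-free graphs and of $P_7$-free graphs is closed under taking induced subgraphs and under this contraction, it suffices to treat \emph{prime} graphs. Next I would apply the clique-cutset decomposition: if $C$ is a clique whose removal disconnects $G$, the optimum of $G$ can be recovered in polynomial time from the optima of the blocks obtained by splitting along $C$, and the whole decomposition tree has only polynomially many leaves (the \emph{atoms}). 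Hence it is enough to solve MWSS on a connected, prime $(P_7,\text{bull})$-free graph with no clique cutset.

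For such an atom I would exploit the two forbidden subgraphs as follows. Because every shortest path is induced, a connected $P_7$-free graph has diameter at most $5$, so fixing a vertex $v$ partitions the vertex set into the layers $L_0=\{v\}, L_1=N(v), \dots, L_5$ by distance from $v$. The bull-free hypothesis then tightly controls how these layers interact. Concretely, I would prove a sequence of \emph{attachment lemmas}: for a connected component $A$ of $G-N[v]$, the vertices of $N(v)$ with a neighbor in $A$ behave almost homogeneously, since a triangle inside $N[v]$ carrying two private neighbors in distinct layers immediately produces a bull. Iterating this argument across the boundedly many layers should cut the possible adjacency patterns down to a short list.

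The aim of the structural analysis is to show that every such atom either has bounded size or can be reduced, after fixing a bounded amount of information (for instance the trace of the solution on $N[v]$, or on a dominating clique), to an instance of MWSS in a class already known to be tractable. In particular I would try to force the residual graph to be claw-free, fork-free ($S_{1,1,2}$-free), or $P_5$-free, so that the algorithms of \cite{FOS,M,NT,NS,S}, of Lozin and Milani\v{c}~\cite{LM}, or of Lokshtanov, Vatshelle and Villager~\cite{LVV} can be invoked as a black box. The total running time is then polynomial, since the decomposition produces only polynomially many atoms and only polynomially many cases are guessed per atom.

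I expect the main obstacle to be exactly this structural dichotomy for atoms: turning the local bull-free constraints into a global statement requires a careful, and probably long, case analysis on how the components of $G-N[v]$ attach to the successive layers and on the internal structure of $N(v)$ itself. Once this analysis is organised so that it appeals only to the presence of a long induced ``tripod'' rather than to $P_7$ specifically, the companion result for $(S_{1,2,3},\text{bull})$-free graphs should fall out of the same lemmas, which is presumably what is meant by proving it \emph{with the same arguments}.
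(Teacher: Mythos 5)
Your first reduction (modular decomposition, so that it suffices to handle prime graphs) matches the paper, which invokes the theorem of Lozin and Milani\v{c} for exactly this purpose. Beyond that point, however, the proposal has a genuine gap: everything of substance is deferred to an unproven ``structural dichotomy for atoms,'' which you yourself identify as the main obstacle. A plan that says ``a careful, probably long case analysis should cut the adjacency patterns down to a short list, after which the residual graph should be claw-free, fork-free or $P_5$-free'' is not a proof of that claim, and the claim itself is doubtful in the form stated. The graphs that actually arise --- the components $K$ of the non-neighborhood of a vertex $c$ in a prime $(P_7,\text{bull})$-free graph --- are in general none of claw-free, fork-free or $P_5$-free: the paper's own analysis shows they may contain induced $C_5$'s, $C_7$'s and $P_6$'s, and it never attempts to place them in a previously solved class. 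So the black boxes you propose to invoke do not apply to the objects you would be left with, and no amount of ``guessing the trace of the solution on $N[v]$'' is shown to fix this.

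The paper's actual engine is different and is worth contrasting with your plan. Its base case is not one of the known $S_{i,j,k}$-free classes but \emph{perfection}: since $c$ together with a long antihole in $K$ would give an antiwheel (excluded in prime bull-free graphs), the Strong Perfect Graph Theorem reduces the problem to eliminating odd holes, which in a $P_7$-free graph are only $C_5$'s and $C_7$'s; once these are gone, $K$ is a bull-free perfect graph and Penev's algorithm solves MWSS there. The $C_5$'s are then destroyed by a branching argument in the style of Brandst\"adt and Mosca: one classifies each $C_5$ of $K$ by how many vertices it has in $H=N_K(d)$ for a neighbor $d$ of $c$, and proves (via a ``score'' function on red edges and several bull/$P_7$ case analyses) that there is a single vertex $x$ such that $K\setminus N(x)$ contains no $C_5$ of the given type; recursing on $K\setminus N(x)$ and $K\setminus\{x\}$ gives polynomially many subproblems. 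Neither the SPGT, nor an algorithm for perfect graphs, nor any mechanism for finding such a vertex $x$ appears in your outline, and the clique-cutset decomposition you add is not used (and not needed) in the paper. As written, the proposal is a research programme rather than a proof, and its central conjecture is contradicted by the structure the paper exhibits.
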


\begin{theorem}\label{thm:S123}
The Maximum Weight Stable Set problem can be solved in polynomial time
in the class of ($S_{1,2,3}$, bull)-free graphs.
\end{theorem}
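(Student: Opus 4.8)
The plan is to run the same machinery that proves Theorem~\ref{thm:P7}, exploiting the fact that the class of ($S_{1,2,3}$, bull)-free graphs and the class of ($P_7$, bull)-free graphs differ only in the way a long induced path is forbidden to grow. First I would record the standard reductions that both preserve the class and commute with the MWSS value. A graph that admits a clique cutset can be handled by decomposing it into its atoms and combining the weighted stable-set values over the pieces in polynomial time; since ($S_{1,2,3}$, bull)-freeness is hereditary, every atom again lies in the class, so I may assume $G$ has no clique cutset. Likewise, modular (homogeneous-set) decomposition lets me contract every nontrivial module to a single suitably reweighted vertex, so I may further assume $G$ is prime. These are exactly the two reductions used for Theorem~\ref{thm:P7}.

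With $G$ reduced to a prime atom, the heart of the argument is a structural dichotomy: I would show that any such $G$ either lies in a class for which MWSS is already known to be polynomial (perfect graphs via the Gr\"otschel--Lov\'asz--Schrijver algorithm, claw-free graphs, $P_5$-free graphs by \cite{LVV}, or graphs of bounded clique-width), or carries an explicit structure on which a direct algorithm runs. The driver of this analysis is a longest induced path, around which bull-freeness forces the neighborhoods of the interior vertices to be highly structured, since a vertex seeing an interior edge in a non-uniform way tends to create a bull.

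Here is where the forbidden subgraph actually enters, and where the ``same arguments'' claim must be made precise. Observe that both $P_7$ and $S_{1,2,3}$ arise from a $P_6 = v_1 \cdots v_6$ by adding a single vertex $u$: attaching $u$ at the endpoint $v_6$ yields $P_7$, whereas attaching $u$ only at the interior vertex $v_3$ yields $S_{1,2,3}$. In the proof of Theorem~\ref{thm:P7} the exclusion of $P_7$ is invoked to forbid end-extensions of a path; to obtain the present theorem I would revisit each such step and check that the attachment it rules out can instead be exhibited as an induced $S_{1,2,3}$, using the bull-free constraints already in force to pin down the neighbors of $u$ along the path. If every one of these local exclusions can be rephrased in this way, then each outcome of the dichotomy reappears verbatim and the algorithmic conclusion follows as before.

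The main obstacle is precisely this last transfer. The linear structure of $P_7$ and the branching (degree-$3$) structure of $S_{1,2,3}$ are genuinely different, so any step of the $P_7$-analysis that produces an end-extension of the path must be re-derived as producing a mid-attachment. This forces a fresh examination of the bull-free behavior of the neighborhood of an \emph{interior} vertex of the path, rather than of an endpoint, and it is conceivable that a new sub-case surfaces there; ruling these out carefully, and confirming that the reductions of the first paragraph leave no residual configuration uncovered, is the part of the proof I expect to demand the most care.
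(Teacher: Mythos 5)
There is a genuine gap: your proposal is a plan to ``transfer'' the proof of Theorem~\ref{thm:P7}, but it neither carries out the transfer nor correctly describes the machinery it proposes to transfer. The proof of Theorem~\ref{thm:P7} does not use clique cutsets, is not organized around a longest induced path, and does not end in a dichotomy between known polynomial classes (claw-free, $P_5$-free, bounded clique-width) and an ``explicit structure.'' What it actually does is: reduce to prime graphs via Theorem~\ref{thm:LM}; fix a vertex $c$ and a component $K$ of its non-neighborhood; use bull-freeness (via the wheel, antiwheel, umbrella, parasol, $G_1$, $G_2$ exclusions) together with the Strong Perfect Graph Theorem to show that a non-perfect $K$ contains a $C_5$ or $C_7$ meeting the set $H=N_K(d)$ in a controlled way; and then repeatedly find a single vertex $x$ such that $K\setminus N(x)$ has no $C_5$ of a given type, branching on $x$ until Penev's algorithm for bull-free perfect graphs applies. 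Since your description of the $P_7$ argument is inaccurate, the instruction ``revisit each step and replace an end-extension of the path by a mid-attachment'' has no concrete referent, and you yourself defer the substantive work (``it is conceivable that a new sub-case surfaces there'').

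Moreover, the actual adaptation is not a verbatim substitution of one forbidden configuration for another; the structure genuinely changes, and in a direction you do not anticipate. In the $(S_{1,2,3},\mbox{bull})$-free case one proves that $K$ contains \emph{no} odd hole of length at least $7$ and that \emph{every} $C_5$ in $K$ meets $H$ in one or two non-adjacent vertices --- i.e., the type-$0$ $C_5$'s and the $C_7$ case, which require separate and lengthy treatment in the $P_7$ proof, disappear entirely. This collapses the three-case branching of Theorem~\ref{thm:P7} to a single step: a vertex $h_0\in H$ of maximum score (with respect to the red edges $h'$-$u$-$v$-$h''$) already satisfies that $K\setminus N(h_0)$ is perfect. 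Establishing these claims requires fresh case analyses in which induced copies of $S_{1,2,3}$ are exhibited directly (typically rooted at $d$ or at a vertex of $H$, using $c$ and $d$ as one branch of the claw), not recycled $P_7$ arguments. None of this appears in your proposal, so as written it does not constitute a proof.
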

Theorem~\ref{thm:P7} generalizes the main results in \cite{BM} and
\cite{MP}, and Theorem~\ref{thm:S123} generalizes the main results in
\cite{KM2} and  \cite{MP}.

\begin{figure}[ht]
\unitlength=0.08cm
\thicklines
\begin{center}
\begin{picture}(24,12) 
\multiput(6,0)(12,0){2}{\vertex}
\multiput(0,12)(24,0){2}{\vertex}
\put(12,9){\vertex}
\put(6,0){\line(1,0){12}}
\put(18,0){\line(1,2){6}}
\put(6,0){\line(-1,2){6}}
\put(12,9){\line(2,-3){6}}
\put(12,9){\line(-2,-3){6}}
%
\end{picture}
\end{center}
\caption{The bull.}
\label{fig:bull}
\end{figure}
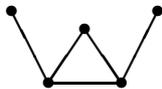

Our paper is organised as follows.  In the rest of this section we
recall some definitions, notations and well known results.  In Section
2 we develop a structural description that we can use to solve the
MWSS efficiently.  In Section~\ref{sec:P7}, thanks to the detailed
structure, we show how to solve the MWSS in polynomial time in the
class of ($P_7$, bull)-free graphs.  In Section~\ref{sec:S123}, we
show how to solve the MWSS in polynomial time in the class of
($S_{1,2,3}$, bull)-free graphs.

\medskip

Let $G$ be a graph.  For any vertex $v \in V(G)$, we denote by $N(v) =
\{u \in V(G) \mid uv \in E(G)\}$ the \emph{neighborhood} of $v$.  For
any $S \subseteq V(G)$ we denote by $G[S]$ the \emph{induced subgraph}
of $G$ with vertex-set $S$.  For any $X\subseteq V(G)$, we may write
$G\setminus X$ instead of $G[V(G)\setminus X]$.  For any $S\subseteq
V(G)$ and $x\in V(G)$, we let $N_S(x)$ stand for $N(x)\cap S$.  For
two sets $K, S \subseteq V(G)$, we say that $K$ is \emph{complete} to
$S$ if every vertex of $K$ is adjacent to every vertex of $S$, and we
say that $K$ is \emph{anticomplete} to $S$ if no vertex of $K$ is
adjacent to any vertex of $S$.  A \emph{homogeneous set} is a set $S
\subseteq V(G)$ such that every vertex in $V(G) \setminus S$ is either
complete to $S$ or anticomplete to $S$.  A homogeneous set is
\emph{proper} if it contains at least two vertices and is different
from $V(G)$.  A graph is \emph{prime} if it has no proper homogeneous
set.

\medskip

A \emph{hole} in a graph is any induced cycle on at least four
vertices.  An \emph{antihole} is the complement of a hole.  A graph
$G$ is perfect if, for every induced subgraph $G'$ of $G$, the
chromatic number of $G'$ is equal to the maximum clique size in $G'$.
The Strong Perfect Graph Theorem \cite{CRST} establishes that a graph
is perfect if and only if it contains no odd hole and no odd antihole.

\medskip

In a series of papers \cite{Ch-bull1,Ch-bull23} Chudnovsky established
a decomposition theorem for all bull-free graphs.  Based on this
decomposition, Thomass\'e, Trotignon and Vu\v{s}kovi\'c \cite{TTV}
proved that the MWSS problem is fixed-parameter tractable in the class
of bull-free graphs.  It might be that these results could be adapted
so as to yield an alternate proof of Theorems~\ref{thm:P7}
and~\ref{thm:S123}.  However we are able to avoid using the rather
complex machinery of \cite{TTV} and \cite{Ch-bull1,Ch-bull23}.  Our
proof is based on conceptually simple ideas derived from \cite{BM} and
is self-contained.

\section{Structural description}

A class of graphs is \emph{hereditary} if, for every graph $G$ in the
class, every induced subgraph of $G$ is also in the class.  For
example, for any set $\mathcal{F}$ of graphs, the class of
$\mathcal{F}$-free graphs is hereditary.  We will use the following
theorem of Lozin and Milani\v{c} \cite{LM}.

\begin{theorem}[\cite{LM}]\label{thm:LM}
Let $\cal{G}$ be a hereditary class of graphs.  Suppose that there is
a constant $c \geq 1$ such that the MWSS problem can be solved in time
$O(|V(G)|^c)$ for every prime graph $G$ in $\cal{G}$.  Then the MWSS
problem can be solved in time $O(|V(G)|^c + |E(G)|)$ for every graph
$G$ in $\cal{G}$.
\end{theorem}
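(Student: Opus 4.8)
The plan is to reduce MWSS on an arbitrary graph $G \in \mathcal{G}$ to a collection of prime induced subgraphs of $G$ by means of the modular (substitution) decomposition, and then to invoke the assumed $O(|V|^c)$ algorithm on each prime piece. The two ingredients are a substitution lemma for homogeneous sets and a careful accounting of the running time.

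First I would establish a substitution lemma: if $H$ is a homogeneous set of $G$, then the maximum weight of a stable set of $G$ equals the maximum weight of a stable set of the graph $G'$ obtained from $G$ by deleting $H$ and adding one new vertex $v_H$ adjacent exactly to those vertices of $V(G)\setminus H$ that are complete to $H$, where $v_H$ receives weight equal to the maximum weight of a stable set of $G[H]$. The proof is immediate from the module property: every vertex outside $H$ is complete or anticomplete to $H$, so any stable set that meets $H$ avoids all vertices complete to $H$, and its trace on $H$ may be replaced by a heaviest stable set of $G[H]$ without decreasing its weight; conversely any stable set of $G'$ lifts back to one of $G$ of the same weight. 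Since $\mathcal{G}$ is hereditary, both $G[H]$ and $G'$ lie in $\mathcal{G}$.

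Next I would recall that the modular decomposition tree of $G$ can be computed in time $O(|V(G)| + |E(G)|)$ (e.g.\ by the algorithms of Cournier--Habib or McConnell--Spinrad). Each internal node is of parallel, series, or prime type, and the quotient at a prime node is itself a prime graph in $\mathcal{G}$. Processing the tree bottom-up, I would compute for each node the maximum weight $\alpha(M)$ of a stable set of the module $M$ it represents: at a leaf, $\alpha$ is the vertex weight; at a parallel node, $\alpha$ is the sum of the children's values; at a series node, $\alpha$ is the maximum of the children's values; at a prime node, I form the quotient graph, assign each of its vertices the $\alpha$-value of the corresponding child module, and apply the assumed algorithm to this prime weighted graph. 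Correctness at each node follows from repeated application of the substitution lemma.

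The crux of the analysis is the running time at the prime nodes. Let the prime nodes have quotients of sizes $k_1, k_2, \ldots$; each $k_i$ equals the number of children of that node, and since the tree has $|V(G)|$ leaves and every internal node has at least two children, $\sum_i k_i = O(|V(G)|)$. Solving MWSS on the $i$-th quotient costs $O(k_i^c)$, and because $c \geq 1$ the map $x \mapsto x^c$ is superadditive on the nonnegative reals, so $\sum_i k_i^c \leq \bigl(\sum_i k_i\bigr)^c = O(|V(G)|^c)$. The remaining work---building the quotients, summing and maximizing at parallel and series nodes, and the decomposition itself---is $O(|V(G)| + |E(G)|)$, yielding the claimed bound. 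The step I expect to require the most care is precisely this accounting: one must verify that forming all quotient graphs costs only linear time in total, and that the superadditivity estimate is applied to the \emph{child-counts} $k_i$ rather than to the module sizes, since the latter overlap and would not telescope.
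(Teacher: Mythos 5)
Your argument is correct, but note that the paper itself gives no proof of this statement: it is imported verbatim from Lozin and Milani\v{c} \cite{LM}, so there is no in-paper proof to compare against. What you have written is essentially the standard (and, up to presentation, the original) argument: the substitution lemma for homogeneous sets, the linear-time modular decomposition tree with parallel/series/prime nodes handled by sum/max/the assumed prime-case algorithm, and the key accounting step that the child-counts $k_i$ of the internal nodes satisfy $\sum_i k_i = O(|V(G)|)$, so superadditivity of $x\mapsto x^c$ for $c\ge 1$ gives $\sum_i k_i^c = O(|V(G)|^c)$. Your closing caution is exactly the right one: the estimate must be applied to the quotient sizes, not the module sizes, and the total size of all quotient graphs is $O(|V(G)|+|E(G)|)$ because distinct quotient edges correspond to disjoint edge sets of $G$. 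No gaps.
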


The classes of ($P_7$, bull)-free graphs and ($S_{1,2,3}$, bull)-free
graphs are hereditary.  Hence, in order to prove Theorems~\ref{thm:P7}
and~\ref{thm:S123} it suffices to prove them for prime graphs.

\medskip

In a graph $G$, let $H$ be a subgraph of $G$.  For each $k>0$, a
\emph{$k$-neighbor} of $H$ is any vertex in $V(G)\setminus V(H)$ that
has exactly $k$ neighbors in $H$.  The following two lemmas are
straightforward and we omit their proof.

\begin{lemma}[\cite{MP}]\label{lem:c5n}
Let $G$ be a bull-free graph.  Let $C$ be an induced $C_5$ in $G$,
with vertices $c_1, \ldots, c_5$ and edges $c_ic_{i+1}$ for each $i$
modulo $5$.  Then:
\begin{itemize}\itemsep0pt
\item
Any $2$-neighbor of $C$ is adjacent to $c_i$ and $c_{i+2}$ for some
$i$.
\item
Any $3$-neighbor of $C$ is adjacent to $c_i$, $c_{i+1}$ and $c_{i+2}$
for some $i$.
\item
If a non-neighbor of $C$ is adjacent to a $k$-neighbor of $C$, then
$k\in\{1,2,5\}$.
\end{itemize}
\end{lemma}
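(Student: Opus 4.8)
The plan is to use the structural reading of the bull as a triangle with two pendant vertices: in the notation fixed in the paper, the bull consists of the triangle $\{b,c,e\}$ together with a vertex $a$ adjacent in the triangle to $b$ only and a vertex $d$ adjacent in the triangle to $c$ only, with $a$ and $d$ nonadjacent. Thus, to exhibit a bull it suffices to find a triangle $T$, two vertices $p,q$ attached to two \emph{distinct} vertices of $T$ with each of $p,q$ having no other neighbor in $T$, and $p,q$ nonadjacent. Each part of the lemma will be proved by assuming the forbidden adjacency pattern and displaying such a configuration, contradicting bull-freeness. A recurring observation is that a non-neighbor of $C$ is, by definition, nonadjacent to every $c_i$, so it is automatically a ``clean'' pendant; likewise two cycle vertices at distance $2$ on $C$ are nonadjacent. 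All indices are read modulo $5$.

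For the first item, suppose a $2$-neighbor $x$ has two \emph{consecutive} neighbors, say $c_i$ and $c_{i+1}$. Then $\{x, c_i, c_{i+1}\}$ is a triangle, $c_{i-1}$ is adjacent among these three to $c_i$ only, and $c_{i+2}$ is adjacent among these three to $c_{i+1}$ only (neither is adjacent to $x$, since $x$ has exactly two neighbors on $C$); moreover $c_{i-1}$ and $c_{i+2}$ are nonadjacent. This is a bull, a contradiction, so the two neighbors of $x$ must be of the form $c_i, c_{i+2}$. For the second item, a $3$-neighbor $x$ misses exactly two vertices of $C$, and these form either an edge or a nonedge of $C_5$. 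If they form a nonedge, then up to relabelling $x$ is adjacent to $c_1, c_2, c_4$ and misses $c_3, c_5$; now $\{x, c_1, c_2\}$ is a triangle with pendant $c_5$ at $c_1$ and pendant $c_3$ at $c_2$ (neither adjacent to $x$, and $c_3c_5$ a nonedge), again a bull. Hence the two missed vertices are consecutive and the three neighbors are consecutive.

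For the third item, let $y$ be a non-neighbor of $C$ adjacent to a $k$-neighbor $x$; the goal is to rule out $k\in\{3,4\}$. In either case $x$ has a pair of consecutive neighbors $c_i, c_{i+1}$ for which at least one of $c_{i-1}, c_{i+2}$ is a \emph{non}-neighbor of $x$: for $k=3$ the three neighbors are consecutive by the second item, so choosing the pair at one end supplies such a vertex, while for $k=4$ the unique missed vertex is adjacent on $C$ to one member of a suitable consecutive pair. Taking the triangle $\{x, c_i, c_{i+1}\}$, the pendant $y$ at $x$, and that missed cycle vertex as the second pendant attached to the corresponding $c_i$ or $c_{i+1}$, we obtain a bull: $y$ is nonadjacent to every vertex of $C$ (so in particular to the second pendant and to both other triangle vertices), and the missed cycle vertex is nonadjacent to $x$ and to the opposite triangle vertex. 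This forces $k\in\{1,2,5\}$.

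The conceptual content is entirely captured by the triangle-plus-two-pendants description; the remaining work is bookkeeping of non-adjacencies. I expect the only place needing slight care to be the third item, where one must check that a suitable consecutive pair and an accompanying missed cycle vertex always exist for $k\in\{3,4\}$ and that $y$, being a non-neighbor of $C$, is nonadjacent to the cycle vertices used and to the second pendant. This verification, rather than any genuine difficulty, is the main (and modest) obstacle.
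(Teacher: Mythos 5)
Your proof is correct and is precisely the kind of direct case analysis the paper has in mind: the authors omit the proof of this lemma altogether, declaring it straightforward, and your triangle-plus-two-pendants verification (with all the nonadjacencies checked, including the $k=3,4$ cases of the third item) fills in exactly the intended argument.
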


\begin{lemma}\label{lem:c7n}
Let $G$ be a bull-free graph.  Let $C$ be an induced $C_7$ in $G$,
with vertices $c_1, \ldots, c_7$ and edges $c_ic_{i+1}$ for each $i$
modulo $7$.  Then: \begin{itemize}\itemsep0pt
\item
Any $2$-neighbor of $C$ is adjacent to $c_i$ and either $c_{i+2}$ or
$c_{i+3}$ for some $i$.
\item
Any $3$-neighbor of $C$ is adjacent to either to $c_i$, $c_{i+1}$ and
$c_{i+2}$ or to $c_i$, $c_{i+2}$ and $c_{i+4}$ for some $i$.
\item
$C$ has no $k$-neighbor for any $k\in\{4,5,6\}$.
\end{itemize}
\end{lemma}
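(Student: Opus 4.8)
The plan is to analyze, for a fixed vertex $v\in V(G)\setminus V(C)$, the set $N=N_C(v)$ of its neighbours on $C$, which I regard as a subset of $\mathbb{Z}_7$ with indices read modulo $7$. The only induced bulls that can live inside $\{v\}\cup V(C)$ are those whose triangle is $\{v,c_i,c_{i+1}\}$ for some $i$ with $c_i,c_{i+1}\in N$: indeed $C$ is triangle-free (it is an induced $7$-cycle), so every triangle on these vertices must use $v$, and the only neighbours of $v$ joined by an edge are consecutive pairs $c_i,c_{i+1}$. Thus all constraints on $N$ come from forbidding bulls with such a triangle, and my first step is to record these constraints by listing, for a fixed consecutive pair $c_i,c_{i+1}\in N$, which cycle vertices may play the two pendant roles (a pendant being a $c_j$ adjacent to exactly one of $v,c_i,c_{i+1}$, the two pendants non-adjacent and attached to distinct triangle vertices).

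This pendant bookkeeping yields two families of constraints. First, taking $c_{i-1}$ (attached to $c_i$) and $c_{i+2}$ (attached to $c_{i+1}$) gives: \emph{if $c_i,c_{i+1}\in N$ then $c_{i-1}\in N$ or $c_{i+2}\in N$}, since otherwise $\{c_{i-1},c_i,c_{i+1},c_{i+2},v\}$ induces a bull. Second, allowing a pendant on $v$ itself — namely $c_{i+3}$ or $c_{i+4}$ attached to $v$, together with $c_{i-1}$ attached to $c_i$ — gives: \emph{if $c_i,c_{i+1}\in N$ and $c_{i-1}\notin N$ then $c_{i+3},c_{i+4}\notin N$}. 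Its mirror image (reflecting across the edge $c_ic_{i+1}$) states: \emph{if $c_i,c_{i+1}\in N$ and $c_{i+2}\notin N$ then $c_{i-2},c_{i+4}\notin N$}. Checking that each listed configuration really is an induced bull is routine distance-checking on $C_7$, and this is the only genuinely fiddly part of the argument.

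With these in hand the three bullet points follow by a short run analysis of $N$ on the cycle $\mathbb{Z}_7$. For the $2$- and $3$-neighbour statements I would first use the first constraint to forbid any \emph{maximal run} of length exactly $2$: such a run $\{c_i,c_{i+1}\}$ has $c_{i-1},c_{i+2}\notin N$, directly producing a bull. Up to the dihedral symmetry the $2$-element subsets of $\mathbb{Z}_7$ are exactly $\{c_i,c_{i+1}\}$, $\{c_i,c_{i+2}\}$, $\{c_i,c_{i+3}\}$, so excluding the consecutive type proves the first bullet; likewise, among the four types of $3$-element subsets the two bad types $\{c_i,c_{i+1},c_{i+3}\}$ and $\{c_i,c_{i+1},c_{i+4}\}$ each contain a maximal run of length $2$ and are excluded, leaving exactly $\{c_i,c_{i+1},c_{i+2}\}$ and $\{c_i,c_{i+2},c_{i+4}\}$, which is the second bullet.

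For the last bullet I would show $|N|\le 3$ outright. If $|N|\ge 4$ then, since an independent set of $C_7$ has at most $3$ vertices, $N$ contains two consecutive vertices and hence a maximal run $R$ of length $\ell\ge 2$; by the previous paragraph $\ell\ne 2$, so $\ell\ge 3$ and $R\supseteq\{c_i,c_{i+1},c_{i+2}\}$ with $c_{i-1},c_{i+\ell}\notin N$. Applying the second constraint to the pair $c_i,c_{i+1}$ (using $c_{i-1}\notin N$) forces $c_{i+3},c_{i+4}\notin N$, whence $\ell=3$; applying its mirror image to the pair $c_{i+1},c_{i+2}$ (using $c_{i+3}\notin N$) forces $c_{i-1},c_{i-2}\notin N$. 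Together these give $N\cap\{c_{i-2},c_{i-1},c_{i+3},c_{i+4}\}=\varnothing$, and since these four vertices together with $c_i,c_{i+1},c_{i+2}$ exhaust $C$, we get $N=\{c_i,c_{i+1},c_{i+2}\}$, contradicting $|N|\ge 4$. Hence $C$ has no $k$-neighbour for $k\in\{4,5,6\}$. The main obstacle throughout is purely organisational: correctly enumerating the pendant configurations (especially those with a pendant on $v$) and keeping the index arithmetic modulo $7$ straight; once the two constraint families are established, the rest is a mechanical case check that the dihedral symmetry keeps short.
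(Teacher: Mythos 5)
Your proof is correct and complete: the reduction to triangles of the form $\{v,c_i,c_{i+1}\}$, the two pendant constraints (and the mirror), and the run analysis on $\mathbb{Z}_7$ all check out, including the index arithmetic. The paper explicitly omits the proof of this lemma as straightforward, and your argument is exactly the routine case analysis that is intended.
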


For any integer $k\ge 5$, a \emph{$k$-wheel} is a graph that consists
of a $C_k$ plus a vertex (called the center) adjacent to all vertices
of the cycle.  The following lemma was proved for $k\ge 7$
in~\cite{RSb}; actually the same proof holds for all $k\ge 6$ as
observed in \cite{DM,FMP}.
\begin{lemma}[\cite{RSb,DM,FMP}]\label{lem:wheel}
A prime bull-free graph contains no $k$-wheel for any $k\ge 6$.
\end{lemma}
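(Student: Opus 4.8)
The plan is to argue by contradiction. Assume $G$ is prime and bull-free but contains a $k$-wheel with $k\ge 6$; write $C=c_1\cdots c_k$ for the rim (a hole, indices mod $k$) and $h$ for the centre, and set $W=V(C)\cup\{h\}$. Since $h\notin V(C)$ and $|V(C)|=k\ge 2$, the set $V(C)$ is a candidate proper homogeneous set, and in $G[W]$ it is genuinely homogeneous because $h$ is complete to it. The whole strategy is to show that \emph{every} vertex of $V(G)\setminus V(C)$ is complete or anticomplete to $V(C)$: this makes $V(C)$ a proper homogeneous set of $G$, contradicting primeness. Thus everything reduces to understanding the possible traces $N_C(v)$ of an outside vertex $v$, and the analysis splits according to whether $v$ is adjacent to $h$.

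First I would dispose of the vertices $v$ that are non-adjacent to $h$, claiming each is complete or anticomplete to $C$. If not, $v$ is \emph{partial}: it has a neighbour and a non-neighbour on $C$, so there is an index $i$ with $c_i\in N_C(v)$ and $c_{i+1}\notin N_C(v)$. Whenever $v$ also has a non-neighbour $c_j$ with $j\notin\{i-1,i,i+1,i+2\}$, the set $\{v,c_i,h,c_{i+1},c_j\}$ induces a bull, with triangle $\{c_i,h,c_{i+1}\}$, horn $v$ attached only to $c_i$, and horn $c_j$ attached only to $h$; here $k\ge 6$ guarantees the needed non-edges $c_ic_j,\,c_{i+1}c_j\notin E(G)$ and $vh\notin E(G)$ holds by assumption. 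Applying this at each switch of $N_C(v)$, a short case analysis on the arcs of $N_C(v)$ leaves only the possibility that $v$ misses exactly one rim vertex $c_m$; but then $\{c_{m-2},c_{m-1},v,c_m,c_{m+2}\}$ induces a bull (triangle $\{c_{m-2},c_{m-1},v\}$, horns $c_m$ on $c_{m-1}$ and $c_{m+2}$ on $v$, again using $k\ge 6$ for the non-adjacencies at rim-distance $2,3,4$). Hence every partial vertex is adjacent to $h$.

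The hard part will be the vertices adjacent to $h$. Such a vertex can attach partially to $C$ \emph{without} creating any bull inside $W\cup\{v\}$: for example a vertex adjacent to $h$ and to exactly one rim vertex $c_1$ induces no bull with $W$, because any two rim vertices are both adjacent to the universal centre $h$ and so cannot serve as the two horns of a bull whose triangle contains $h$. A direct check shows that the only bull available from such a $v$ is the one built on the triangle $\{v,c_i,c_{i+1}\}$ over two consecutive rim-neighbours, which merely forbids $N_C(v)$ from having a maximal arc of length exactly two, while still permitting isolated neighbours and arcs of length at least three. Consequently a single partial vertex does not by itself contradict primeness, and $V(C)$ need not be homogeneous; this is the crux of the proof.

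To finish I would therefore treat the centre-adjacent partial vertices globally. By the previous steps the vertices outside $W$ split cleanly: every one not adjacent to $h$ is complete or anticomplete to $C$, so the \emph{only} obstruction to $V(C)$ being homogeneous is the set $M$ of partial vertices, all contained in $N(h)$ and all attaching as unions of isolated vertices and arcs of length at least three. The aim is to show that two such vertices cannot coexist with the wheel without producing a bull, or else that one of them lets us enlarge $V(C)$ to a proper homogeneous set by adjoining equivalent vertices. A natural device is to re-route the hole through a partial vertex—if $v$ is adjacent to $h$ and to the arc $c_1,c_2,c_3$, then $c_1,v,c_3,c_4,\dots,c_k$ is again a $k$-hole with centre $h$—and then run an extremal (e.g.\ longest-rim) argument until no partial vertex survives. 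Controlling the mutual interaction of these centre-adjacent partial vertices is the main technical obstacle; by contrast, the case of vertices non-adjacent to $h$ and all the bull-exhibiting computations are routine once $k\ge 6$ is invoked to secure the rim non-adjacencies. In all cases the outcome is a proper homogeneous set of $G$, contradicting its primeness and proving that no $k$-wheel with $k\ge 6$ occurs.
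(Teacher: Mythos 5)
The paper itself gives no proof of this lemma: it is imported from \cite{RSb,DM,FMP} (proved there for $k\ge 7$ and observed to extend to $k\ge 6$), so there is no in-paper argument to compare yours against; your attempt must stand on its own, and it does not. The genuine gap is one you name yourself: the case of partial vertices adjacent to the centre $h$ is never carried out. As you correctly observe, a vertex adjacent to $h$ and to exactly one rim vertex induces no bull with the wheel, so $V(C)$ is \emph{not} in general a homogeneous set and your announced strategy (``show every outside vertex is complete or anticomplete to $V(C)$'') cannot succeed as stated. What you substitute for it --- ``show two such vertices cannot coexist'', ``enlarge $V(C)$ by adjoining equivalent vertices'', ``re-route the hole and run an extremal argument until no partial vertex survives'' --- is a list of intentions, not an argument: no homogeneous set is ever exhibited in this case, and controlling the mutual adjacencies of the centre-adjacent partial vertices is precisely where the published proofs do their real work. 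Until that case is closed, the lemma is not proved.

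A secondary, fixable inaccuracy: in the case $vh\notin E(G)$, your switch analysis does not leave ``only the possibility that $v$ misses exactly one rim vertex''. The bull $\{v,c_i,h,c_{i+1},c_j\}$ at a descending switch confines the non-neighbours of $v$ on $C$ to $\{c_{i-1},c_{i+1},c_{i+2}\}$, and intersecting with the reversed switch still permits $N_C(v)=V(C)\setminus\{c_{i+1},c_{i+2}\}$, i.e.\ two consecutive misses. That configuration is indeed impossible, but it requires a further bull not covered by your two displayed ones --- for instance the triangle $\{c_{i-1},c_i,v\}$ with horns $c_{i+1}$ (attached to $c_i$) and $c_{i+3}$ (attached to $v$), where $k\ge 6$ supplies the needed rim non-adjacencies. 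So the centre-avoiding case is repairable with one more configuration; the centre-adjacent case is the missing content that actually decides the lemma.
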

Since the bull is a self-complementary graph, the lemma also says that
a prime bull-free graph does not contain the complementary graph of a
$k$-wheel with $k\ge 6$ (a \emph{$k$-antiwheel}).

\begin{figure}[ht]
\unitlength=0.08cm
\thicklines
\begin{center}
\begin{tabular}{cccc}
\begin{picture}(24,24) %
\multiput(12,0)(0,9){2}{\vertex}
\multiput(0,18)(6,0){5}{\vertex}
\put(12,0){\line(0,1){18}}
\put(0,18){\line(1,0){24}}
\put(12,9){\line(-4,3){12}}
\put(12,9){\line(-2,3){6}}
\put(12,9){\line(2,3){6}}
\put(12,9){\line(4,3){12}}
\qbezier(0,18)(12,27)(24,18)
\put(4,-6){Umbrella}
\end{picture}
\quad & \quad
\begin{picture}(24,24) %
\multiput(12,0)(0,9){2}{\vertex}
\multiput(0,18)(6,0){5}{\vertex}
\put(12,0){\line(0,1){18}}
\put(0,18){\line(1,0){24}}
\put(12,9){\line(-4,3){12}}
\put(12,9){\line(-2,3){6}}
\put(12,9){\line(2,3){6}}
\put(12,9){\line(4,3){12}}
\put(4,-6){Parasol}
\end{picture}
\quad & \quad
\begin{picture}(24,24) %
\multiput(6,0)(12,0){2}{\vertex}
\multiput(0,12)(24,0){2}{\vertex}
\multiput(12,18)(0,6){2}{\vertex}
\put(18,9){\vertex}
\put(6,0){\line(1,0){12}}
\put(6,0){\line(-1,2){6}}
\put(18,0){\line(1,2){6}}
\put(0,12){\line(2,1){12}}
\put(24,12){\line(-2,1){12}}
\put(12,18){\line(0,1){6}}
\put(18,0){\line(0,1){9}}
\put(18,9){\line(2,1){6}}
\put(18,9){\line(-2,3){6}}

\put(9,-6){$G_1$}
\end{picture}
\quad & \quad
\begin{picture}(24,24) %
\multiput(6,0)(12,0){2}{\vertex}
\multiput(0,12)(24,0){2}{\vertex}
\multiput(12,18)(0,6){2}{\vertex}
\put(15,6){\vertex}
\put(6,0){\line(1,0){12}}
\put(6,0){\line(-1,2){6}}
\put(18,0){\line(1,2){6}}
\put(0,12){\line(2,1){12}}
\put(24,12){\line(-2,1){12}}
\put(12,18){\line(0,1){6}}
\put(6,0){\line(3,2){9}}
\put(18,0){\line(-1,2){3}}
\put(15,6){\line(3,2){9}}
\put(9,-6){$G_2$}
\end{picture}
\end{tabular}
\end{center}
\caption{Four special graphs.}
\label{fig:4graphs}
\end{figure}
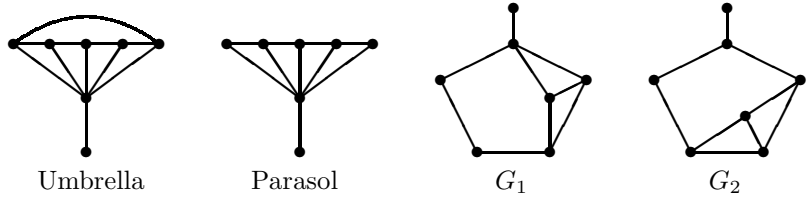

An \emph{umbrella} is a graph that consists of a $5$-wheel plus a
vertex adjacent to the center of the $5$-wheel only (see
Figure~\ref{fig:4graphs}).
\begin{lemma}[\cite{MP}]\label{lem:umbr}
A prime bull-free graph contains no umbrella.
\end{lemma}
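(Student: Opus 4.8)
The plan is to argue by contradiction: assume $G$ is prime and bull-free yet contains an umbrella, and derive either a bull or a proper homogeneous set. Write the umbrella as a $C_5$ with vertices $c_1,\dots,c_5$ (edges $c_ic_{i+1}$ modulo $5$), a center $h$ complete to the rim $A=\{c_1,\dots,c_5\}$, and a handle $u$ whose only neighbor in the umbrella is $h$. The starting observation is that the umbrella is itself \emph{not} prime: the rim $A$ is a homogeneous set and $h$ is a universal vertex. Since $G$ is prime, neither feature can persist. Concretely, $A$ is not homogeneous in $G$, so some vertex is a \emph{partial neighbor} of $A$ (it has both a neighbor and a non-neighbor on the $C_5$), and $h$ is not universal in $G$ (otherwise $V(G)\setminus\{h\}$ would be a proper homogeneous set), so $h$ has a non-neighbor $w$. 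These two consequences of primality are the levers of the proof.

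The engine is a triangle-plus-two-horns bull. Given any vertex $x$ with a neighbor and a non-neighbor on $A$, going around the cycle there is an edge $c_ic_{i+1}$ with $x$ adjacent to exactly one endpoint, say $c_i$; then $\{h,c_i,c_{i+1}\}$ is a triangle and $x$ is a horn on $c_i$ provided $x\not\sim h$. For the second horn on $h$ I would use either the handle $u$ (adjacent to $h$ and to neither $c_i$ nor $c_{i+1}$) or the antipodal rim vertex $c_{i+3}$ (the unique $c_j$ non-adjacent to both $c_i$ and $c_{i+1}$, and adjacent to $h$). If $x\not\sim u$ then $\{h,c_i,c_{i+1},x,u\}$ is a bull; so I may assume $x\sim u$, and then Lemma~\ref{lem:c5n} (third bullet, applied to the non-neighbor $u$ of $C$) forces $x$ to be a $1$- or $2$-neighbor of $C$. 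In either pattern one checks, using Lemma~\ref{lem:c5n} once more, that at some admissible boundary edge $c_jc_{j+1}$ the antipodal vertex $c_{j+3}$ is a non-neighbor of $x$, so $\{h,c_j,c_{j+1},x,c_{j+3}\}$ is a bull. The conclusion of this step is that \emph{every partial neighbor of $A$ is adjacent to $h$}; in particular the non-neighbor $w$ is complete or anticomplete to $A$.

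The crux is to eliminate the surviving configurations, and this is where the main obstacle appears. Because $h$ is adjacent to the entire rim, no bull can pass through $h$ using umbrella vertices alone, so the partial neighbors that attach to $h$, and the ``second-center'' vertices (those complete to $A$ but missing $h$), are locally bull-free and cannot be discarded by the construction above. The way around this is precisely to feed in the non-neighbor $w$ of $h$ supplied by primality: combining $w$ with a partial neighbor $x$ and a triangle of the form $\{h,c_i,x\}$ completes a bull unless $w\not\sim x$, and analogous tests of $w$ against the rim and against the handle $u$ force further adjacencies. Pushing this bookkeeping — controlled throughout by the finitely many bull-free attachment patterns allowed by Lemma~\ref{lem:c5n} and by the special role of $u$ — should drive all the partial neighbors into a single uniform ``center-like'' behaviour relative to $A$, at which point $A$ (or a slightly enlarged set) becomes a genuine homogeneous set of $G$, contradicting primality.

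I expect the bull engine of the second paragraph to be routine; the real work lies in the third paragraph, systematically ruling out the center-attached and second-center vertices. The key conceptual point — and the reason primality, rather than mere bull-freeness, is needed — is that $h$ behaves as a universal vertex \emph{inside} the umbrella and thereby shields every potential bull through $h$; importing an $h$-non-neighbor, which any prime graph must contain, is exactly what unblocks the construction and ultimately manufactures the forbidden homogeneous set.
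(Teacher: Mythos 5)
The paper itself does not prove this lemma (it is quoted from \cite{MP}); the closest in-text model for what the actual proof looks like is the proof of Lemma~\ref{lem:parasol}. Measured against that, your second paragraph is fine: the case analysis via Lemma~\ref{lem:c5n} (a $1$- or $2$-neighbor of the rim that misses $h$ gives a bull with an antipodal rim vertex as the second horn; a $3$- or $4$-neighbor cannot be adjacent to $u$, so $u$ serves as the second horn) does establish that every partial neighbor of the rim is adjacent to $h$. But that is only the warm-up, and your third paragraph — which you yourself flag as ``the real work'' — is not a proof but a statement of hope, and the target it aims at is not reachable. The rim $A$ does \emph{not} become homogeneous, nor does any ``slightly enlarged'' set obtained by local attachment analysis: a vertex $x$ with $N(x)=\{h,c_1\}$ (or $\{h,c_1,c_3\}$, or $\{h\}$) added to the umbrella creates no bull whatsoever, because every triangle in the resulting graph contains $h$ and every candidate horn is pendant at $h$. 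No amount of ``bookkeeping'' over direct attachments to the seven umbrella vertices will eliminate such vertices or homogenize $A$. Your proposed lever — an arbitrary non-neighbor $w$ of $h$ guaranteed by primality — does not repair this, because $w$ need not be adjacent to $x$, to the rim, or to anything within bounded distance of the umbrella; ``combining $w$ with a partial neighbor'' presupposes adjacencies you have no right to.

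What the argument actually requires (and what \cite{MP} does, in exact parallel with the proof of Lemma~\ref{lem:parasol} in this paper) is a different candidate homogeneous set. Let $A$ be the set of vertices complete to the $C_5$, let $Z$ be the set of vertices anticomplete to it (so $h\in A$, $u\in Z$), let $A'$ be the vertices of $A$ with a neighbor in $Z$ and $A''$ the vertices of $A\setminus A'$ with a non-neighbor in $A'$, and let $H$ be the component of $G\setminus(A'\cup A'')$ containing the $C_5$. One then proves that $A'\cup A''$ is complete to $V(H)$, whence $V(H)$ is a proper homogeneous set. The crux of that claim is an induction along a shortest path $u_0\hbox{-}\cdots\hbox{-}u_\ell$ in $H$ from the $C_5$ to a hypothetical non-neighbor of some $a\in A'\cup A''$, using $a$'s private neighbor $z\in Z$ to manufacture bulls at each step; vertices at distance $\ell\ge 2$ from the $C_5$, which your local analysis never sees, are exactly where the path argument is needed. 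This global, component-level step is entirely absent from your proposal, so the proof has a genuine gap.
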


A \emph{parasol} is a graph that consists of a $P_5$, plus a sixth
vertex adjacent to all vertices of the $P_5$, plus a seventh vertex
adjacent to the sixth vertex only (see Figure~\ref{fig:4graphs}).
\begin{lemma}\label{lem:parasol}
A prime bull-free graph contains no parasol.
\end{lemma}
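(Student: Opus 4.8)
The plan is to argue by contradiction. Suppose that $G$ is a prime bull-free graph that contains a parasol, and name its vertices as in the definition: let $c_1c_2c_3c_4c_5$ be the $P_5$, let $h$ be the vertex complete to $\{c_1,\dots,c_5\}$, and let $p$ be the vertex adjacent to $h$ only. The set $S=\{c_1,\dots,c_5\}$ is a homogeneous set of the parasol, since $h$ is complete to $S$ and $p$ is anticomplete to $S$. Because $G$ is prime, $S$ is not a proper homogeneous set of $G$, so there is a vertex $x\in V(G)\setminus S$ with both a neighbor and a non-neighbor in $S$; note $x\notin\{h,p\}$. Writing $A=N(x)\cap S$, we have $\emptyset\neq A\neq S$. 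The whole proof then consists in examining the adjacency of $x$ to $h$, to $p$, and to the path, and deriving in each case a bull, a $6$-wheel, or an umbrella, the last two being excluded by Lemmas~\ref{lem:wheel} and~\ref{lem:umbr}.

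First I would dispose of the case $x\not\sim h$ and $x\not\sim p$, which yields a bull directly. Since $A$ is nonempty and different from $S$, the path has two consecutive vertices $c_i,c_{i+1}$ with exactly one of them in $A$; up to reversing the labeling of the path we may assume $c_i\in A$ and $c_{i+1}\notin A$. Then $x-c_i-h-p$ is an induced $P_4$: the edges $xc_i$, $c_ih$, $hp$ are present, while $xh$ and $xp$ are absent by hypothesis and $c_ip$ is absent because $p$ is adjacent only to $h$. Moreover $c_{i+1}$ is adjacent to $c_i$ and to $h$ but to neither $x$ (as $c_{i+1}\notin A$) nor $p$, so $c_{i+1}$ is adjacent to exactly the two middle vertices of this $P_4$. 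Hence $\{x,c_i,h,p,c_{i+1}\}$ induces a bull, a contradiction.

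Next I would treat the case $x\sim h$ (keeping for the moment $x\not\sim p$). Now $\{x\}\cup S\subseteq N(h)$, with $h$ complete to this set and $p$ anticomplete to it. Using bull-freeness on the two induced paths $c_1c_2c_3c_4$ and $c_2c_3c_4c_5$ restricts the possible sets $A$, and one checks that whenever $G[\{x\}\cup S]$ contains an induced $C_6$ (which happens exactly when $A=\{c_1,c_5\}$) we obtain, together with the center $h$, a $6$-wheel, contradicting Lemma~\ref{lem:wheel}; and whenever $G[\{x\}\cup S]$ contains an induced $C_5$ (when $A=\{c_1,c_4\}$ or $A=\{c_2,c_5\}$) we obtain a $5$-wheel with center $h$ to which $p$ is attached as a pendant, that is, an umbrella, contradicting Lemma~\ref{lem:umbr}.

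The remaining configurations are the real obstacle. The trouble is that some distinguishing vertices are harmless on their own: for instance if $x\sim h$ and $A=\{c_3\}$, then $\{x,c_1,\dots,c_5\}$ induces an $S_{1,2,2}$ to which $h$ is complete and $p$ anticomplete, and since a tree dominated by $h$ contains no triangle (hence no bull horn), no hole, and no $C_6$, this larger graph is again bull-free, umbrella-free, and wheel-free. I therefore expect the clean way to finish is to iterate the construction: take a maximal induced tree $T$ with $S\subseteq T\subseteq N(h)$, $h$ complete to $T$, and $p$ anticomplete to $T$. Applying primality to $V(T)$ produces a new distinguisher $y$; maximality rules out the harmless tree-extending behavior (namely $y\sim h$, $y\not\sim p$, and $y$ adjacent to a single vertex of $T$, which would enlarge $T$), so $y$ should fall into a decisive case: non-adjacent to $h$ (a bull, via the $P_4$ argument applied to a boundary edge of $T$), or adjacent to two vertices of $T$ creating an induced hole of length at least $5$ (a $6$-wheel, or an umbrella with pendant $p$). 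The delicate points, where I expect the heaviest case-checking, are precisely the interaction with the pendant $p$ (the subcases $y\sim p$, where the $P_4$ through $h$ acquires a chord and must be replaced) and the verification that maximality of $T$ forces a long hole rather than only triangles or $4$-holes; this is the crux of the argument.
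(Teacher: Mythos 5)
Your proposal handles the easy cases correctly (a distinguisher of the $P_5$ missing both $h$ and $p$ gives a bull; one that sees $h$ and closes a $C_5$ or $C_6$ inside $N(h)$ gives an umbrella or a $6$-wheel), but it stops exactly at the point you yourself label ``the crux'', and that crux does not go through in the form you propose. The obstruction is that a distinguisher $y$ of your maximal tree $T$ with $y\sim h$, $y\not\sim p$ may be adjacent to two \emph{adjacent} vertices of $T$, or to two vertices at distance $2$ in $T$; this creates a triangle or a $C_4$ inside $N(h)$, not a hole of length at least $5$, and none of the available forbidden configurations excludes it. Concretely, already in the original parasol a vertex $x$ with $N(x)\cap\{c_1,\dots,c_5,h,p\}=\{c_1,c_2,h\}$ is compatible with everything: one checks by enumerating the triangles of the resulting $8$-vertex graph that it contains no bull, and it has no hole at all, hence no wheel and no umbrella. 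Such a vertex can neither be absorbed into an induced tree nor be refuted, and since primality only hands you \emph{some} distinguisher rather than one of your choosing, maximality of $T$ gives you nothing. The subcases with $y\sim p$ are likewise deferred rather than done. So the proof is genuinely incomplete.

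The paper closes this gap with a different, non-tree-based construction. It takes $A$ to be the set of \emph{all} vertices complete to $P=\{c_1,\dots,c_5\}$ (not just $h$) and $Z$ the set of vertices anticomplete to $P$, splits off $A'$ (vertices of $A$ with a neighbor in $Z$; this contains $h$, witnessed by $p$) and $A''$ (vertices of $A\setminus A'$ with a non-neighbor in $A'$), and considers the connected component $H$ of $G\setminus(A'\cup A'')$ containing $P$. The ``harmless'' distinguishers you ran into are simply swallowed into $H$; the real work is the claim that $A'\cup A''$ is complete to $V(H)$, proved by taking a shortest path from $P$ inside $H$ to a putative non-neighbor and using the vertex $z\in Z$ sitting behind $A'$ as the extra leg needed to manufacture bulls. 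Then $V(H)$ is a proper homogeneous set, contradicting primality. To salvage your approach you would have to replace the maximal tree by a maximal connected subgraph and import the role of $z$ as an external witness; at that point you would essentially have reconstructed the paper's argument.
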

\begin{proof}
Let $G$ be a prime bull-free graph, and suppose that it contains a
parasol, with vertices $p_1, \ldots, p_5, x,y$ and edges $p_ip_{i+1}$
for $i=1,2,3,4$, and $xp_j$ for $j=1,\ldots,5$ and $xy$.  Let
$P=\{p_1, \ldots, p_5\}$.  Let $A$ be the set of vertices that are
complete to $P$, and let $Z$ be the set of vertices that are
anticomplete to $P$.  Let:
\begin{eqnarray*}
A' &=& \{a\in A\mid a \mbox{ has a neighbor in } Z\}. \\
A'' &=& \{a\in A\setminus A'\mid a \mbox{ has a non-neighbor in }
A'\}.
\end{eqnarray*}
Note that $y\in Z$ and $x\in A'$, so $A'\neq\emptyset$, and that $A''$
is anticomplete to $Z$, by the definition of $A'$.  Let $H$ be the
component of $G\setminus (A'\cup A'')$ that contains $P$.  We claim
that:
\begin{equation}\label{apvh}
\mbox{$A'\cup A''$ is complete to $V(H)$.}
\end{equation}
Proof: Suppose on the contrary that there exist non-adjacent vertices
$a,u$ with $a\in A'\cup A''$ and $u\in V(H)$.  We use the following
notation.  If $a\in A'$, let $z$ be a neighbor of $a$ in $Z$.  If
$a\in A''$, let $b$ be a non-neighbor of $a$ in $A'$, and let $z$ be a
neighbor of $b$ in $Z$; in that case we know that $a$ is not adjacent
to $z$, since $a\notin A'$.  By the definition of $H$, there is a path
$u_0$-$\cdots$-$u_\ell$ in $H$ with $u_0\in P$ and $u_\ell=u$, and
$\ell\ge 0$.  We know that $a$ is adjacent to $u_0$ by the definition
of~$A$, so $\ell\ge 1$.  We choose $u$ that minimizes $\ell$, so the
path $u_0$-$\cdots$-$u_\ell$ is chordless, and $a$ is complete to
$\{u_0,\ldots,u_{\ell-1}\}$, and if $\ell\ge 2$ then
$u_2,\ldots,u_\ell\in Z$.  \\
Suppose that $\ell=1$.  Suppose that $u_1\in A$.  By the definition of
$H$ we have $u_1\in A\setminus(A'\cup A'')$, so $u_1$ is not adjacent
to $z$ and is complete to $A'$, and so $a\notin A'$, hence $a\in A''$,
and $u_1$ is adjacent to $b$.  Then $\{z,b,u_1,u_0,a\}$ induces a
bull, a contradiction.  Hence $u_1\notin A$.  So there is an integer
$i\in\{1,2,3,4\}$ such that $u_1$ has a neighbor and a non-neighbor in
$\{p_i,p_{i+1}\}$.  Suppose that $u_1$ is not adjacent to $z$.  If
$a\in A'$, then $\{z,a,p_i,p_{i+1},u_1\}$ induces a bull.  If $a\in
A''$, then $u_1$ is adjacent to $b$, for otherwise
$\{z,b,p_i,p_{i+1},u_1\}$ induces a bull; but then $\{z,b,u_1,p,a\}$
induces a bull (for $p\in\{p_i,p_{i+1}\}\cap N(u_1)$).  Hence $u_1$ is
adjacent to $z$.  It follows that there is no integer $j$ such that
$\{u_1, p_j, p_{j+1}\}$ induces a triangle, for otherwise there is an
integer $k$ such that $\{z,u_1,p_k,p_{k+1},p_{k+2}\}$ induces a bull.
If we can take $i=1$, then $u_1$ is adjacent to $p_4$, for otherwise
$\{u_1, p_1,p_2,a,p_4\}$ induces a bull; and similarly $u_1$ is
adjacent to $p_5$; but then $\{u_1, p_4, p_5\}$ induces a triangle, a
contradiction.  Hence $u_1$ is either complete or anticomplete to
$\{p_1,p_2\}$, and actually it is anticomplete to that set since
$\{u_1,p_1,p_2\}$ does not induce a triangle.  Likewise $u_1$ is
anticomplete to $\{p_4,p_5\}$.  Hence $u_1$ is adjacent to $p_3$.  But
then $\{u_1,p_3,p_2,a,p_5\}$ induces a bull, a contradiction.  \\
Therefore $\ell\ge 2$.  We have $u_1\notin A$, for otherwise we would
have $u_1\in A'$ because $u_2\in Z$.  Since $u_1\notin A$ and the
graph $\overline{P_5}$ is connected, there are non-adjacent vertices
$p,q\in P$ such that $u_1$ is adjacent to $p$ and not to $q$.  We may
assume up to relabeling that $u_0=p$.  Then $\{u_\ell, u_{\ell-1},
u_{\ell-2}, a, q\}$ induces a bull, a contradiction.  Thus
(\ref{apvh}) holds.

\medskip

Let $R=V(G)\setminus (A'\cup A''\cup V(H))$.  By the definition of
$H$, there is no edge between $V(H)$ and $R$.  By~(\ref{apvh}), $V(H)$
is complete to $A'\cup A''$.  Hence $V(H)$ is a homogeneous set, and
it is proper because $P\subseteq V(H)$ and $A'\neq \emptyset$.
\end{proof}

Let $G_1$ be the graph with vertices $p_1,\ldots,p_5,d,a$ such that
$p_1$-$p_2$-$p_3$-$p_4$-$p_5$-$p_1$ is a $C_5$, $d$ is adjacent to
$p_5$, $a$ is adjacent to $p_5,p_1,p_2$, and there is no other edge.
Let $G_2$ be the graph with vertices $p_1,\ldots,p_5,d,a$ such that
$p_1$-$p_2$-$p_3$-$p_4$-$p_5$-$p_1$ is a $C_5$, $d$ is adjacent to
$p_5$, $a$ is adjacent to $p_1,p_2,p_3$, and there is no other edge.
See Figure~\ref{fig:4graphs}.
\begin{lemma}\label{lem:nog12}
A  prime bull-free graph $G$ contains no $G_1$ and no $G_2$.
\end{lemma}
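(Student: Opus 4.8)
The plan is to exploit primeness via a pair of \emph{true twins} hidden in each of $G_1$ and $G_2$. First I would observe that in $G_1$ the vertices $a$ and $p_1$ satisfy $N[a]=N[p_1]=\{a,p_1,p_2,p_5\}$, so they are true twins; moreover the map that exchanges $a$ and $p_1$ and fixes every other vertex is an automorphism of $G_1$. Likewise, in $G_2$ the vertices $a$ and $p_2$ are true twins with $N[a]=N[p_2]=\{a,p_1,p_2,p_3\}$, exchangeable by an automorphism. Consequently $\{a,p_1\}$ (resp.\ $\{a,p_2\}$) is a homogeneous set of the induced subgraph $G_1$ (resp.\ $G_2$). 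Since $G$ is prime it has no proper homogeneous set, so there must be a vertex $w\in V(G)$ adjacent to exactly one of the two twins; using the automorphism we may assume $w\sim p_1$ and $w\not\sim a$ (resp.\ $w\sim p_2$ and $w\not\sim a$).

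Next I would analyse how $w$ attaches to the rest of the configuration. The guiding idea is that $w$ already distinguishes the twins, so the triangle on the twins together with $w$ is very constrained, and bull-freeness (together with Lemma~\ref{lem:c5n} applied to the $C_5$ on $p_1,\ldots,p_5$) forces $w$ into one of a few neighbourhood patterns on the cycle. For instance, in $G_1$ the triangle $\{a,p_1,p_2\}$ with $p_3$ pendant at $p_2$ already yields a bull on $\{p_3,p_2,a,p_1,w\}$ unless $w$ has a neighbour in $\{p_2,p_3\}$, and similar local bulls eliminate most patterns. The role of the pendant vertex $d$ (adjacent to $p_5$ only) is decisive in the remaining high-degree patterns: when $w$ turns out to be complete, or almost complete, to the cycle, the vertex $d$ at $p_5$ completes a bull. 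For example, if $w$ is a center of the $C_5$ with $w\not\sim d$, then $\{d,p_5,w,p_1,p_3\}$ induces a bull. Where a direct bull is not available I would instead produce an umbrella, a parasol, or a $k$-wheel with $k\ge 6$, and invoke Lemmas~\ref{lem:wheel},~\ref{lem:umbr} and~\ref{lem:parasol}.

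The main obstacle is precisely this case analysis on the neighbourhood of $w$: one must check every possible adjacency of $w$ to $p_2,p_3,p_4,p_5$ and to $d$, and show that each either creates a bull (often only after bringing in $d$) or reproduces one of the already-forbidden configurations of Lemmas~\ref{lem:wheel}--\ref{lem:parasol}. The delicate subcases are those in which $w$ is a $4$- or $5$-neighbor of the cycle with $w\sim d$, since there the simplest bull is unavailable and one has to relabel to an alternative $C_5$ (such as the cycle $a,p_2,p_3,p_4,p_5$ that is present in $G_1$) before applying the wheel or umbrella lemma. Finally, because $G_2$ has the same twin structure, its analysis is symmetric to that of $G_1$, so I would carry out $G_1$ in full and then indicate the corresponding substitutions for $G_2$.
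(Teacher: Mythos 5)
Your twin observation is correct and is in fact the same starting point as the paper's argument, but the single-distinguisher scheme breaks down at one specific attachment pattern, and that pattern is exactly the one your case analysis cannot close. Take $G_1$ and a vertex $w$ with $wp_1\in E(G)$, $wa\notin E(G)$, and $N(w)\cap\{p_2,p_3,p_4,p_5,d\}=\{p_2,p_5\}$, i.e.\ $w$ attaches to $\{p_2,\dots,p_5,d\}$ exactly as $a$ and $p_1$ do. One checks directly that $G_1$ together with such a $w$ contains no bull (the only triangles are $\{w,p_1,p_2\}$, $\{w,p_1,p_5\}$, $\{a,p_1,p_2\}$, $\{a,p_1,p_5\}$, and each has all of its pendant vertices hanging at a single triangle vertex), and no umbrella, parasol or large wheel either. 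So there is no contradiction to be extracted from $w$ alone: the configuration is perfectly consistent with $G$ being prime and bull-free, because $\{a,p_1,w\}$ is again a module of the enlarged configuration and the ``real'' distinguisher may separate $w$ from $p_1$ rather than $a$ from $p_1$. The analogous escape pattern ($wa\notin E(G)$ and $N(w)\cap\{p_1,\dots,p_5,d\}=\{p_1,p_2,p_3\}$) defeats the argument for $G_2$ in the same way.

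The paper closes this loophole by not working with the pair $\{a,p_1\}$ at all: it defines $X$ as the set of \emph{all} vertices with the attachment pattern of $a$ and $p_1$ (adjacent to $p_2$ and $p_5$, non-adjacent to $p_3$, $p_4$ and $d$), lets $Y$ be the connected component of $G[X]$ containing $a$ and $p_1$, and uses primeness to obtain a vertex $b\notin Y$ distinguishing two \emph{adjacent} vertices $y,z\in Y$. The case analysis on the adjacencies of $b$ to $p_2,p_3,p_4,p_5,d$ then runs much as you envisage (and your sample bulls, e.g.\ the one on $\{p_3,p_2,a,p_1,w\}$, are of the right kind), but the terminal case is not a bull: it is the conclusion that $b$ satisfies all the defining conditions of $X$, hence $b\in X$, hence --- since $b$ is adjacent to $y\in Y$ --- $b\in Y$, contradicting $b\notin Y$. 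Without this component step (or an equivalent induction on ever-larger twin classes) your case analysis cannot terminate, so as written the proposal has a genuine gap. A smaller point: the paper never needs Lemma~\ref{lem:parasol} or Lemma~\ref{lem:wheel} here; bulls alone suffice once the component device is in place.
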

\begin{proof}
First suppose that $G$ contains a $G_1$, with the same notation as
above.  Let $X=\{x\in V(G)\mid$ $xp_5,xp_2\in E(G)$ and
$xd,xp_3,xp_4\notin E(G)\}$ (so $a,p_1\in X$), and let $Y$ be the
vertex-set of the component of $G[X]$ that contains $a$ and $p_1$.
Since $G$ is prime, $Y$ is not a homogeneous set, so there are
adjacent vertices $y,z\in Y$ and a vertex $b\in V(G)\setminus Y$ such
that $by\in E(G)$ and $bz\notin E(G)$.  Suppose that $bp_5\notin
E(G)$.  Then $bd\in E(G)$, for otherwise $\{b,y,z,p_5,d\}$ induces a
bull; and similarly $bp_4\in E(G)$.  If $bp_2\notin E(G)$, then
$bp_3\in E(G)$, for otherwise $\{b,y,z,p_2,p_3\}$ induces a bull; but
then $\{p_2,p_3,b,p_4,p_5\}$ induces a bull; so $bp_2\in E(G)$.  Then
$bp_3\in E(G)$, for otherwise $\{d,b,y,p_2,p_3\}$ induces a bull; but
then $\{d,b,p_3,p_2,z\}$ induces a bull.  Hence $bp_5\in E(G)$.
Suppose that $bp_2\notin E(G)$.  Then $bd\in E(G)$, for otherwise
$\{p_2,y,b,p_5,d\}$ induces a bull; and $bp_4\in E(G)$, for otherwise
$\{p_2,y,b,p_5,p_4\}$ induces a bull; and $bp_3\in E(G)$, for
otherwise $\{z,p_5,b,p_4,p_3\}$ induces a bull; but then
$\{d,b,p_4,p_3,p_2\}$ induces a bull.  Hence $bp_2\in E(G)$.  If
$bp_3\in E(G)$, then $bp_4\in E(G)$, for otherwise
$\{z,p_2,b,p_3,p_4\}$ induces a bull, and $bd\in E(G)$, for otherwise
$\{d,p_5,p_4,b,p_2\}$ induces a bull; but then $\{z,p_5,d,b,p_3\}$
induces a bull.  Hence $bp_3\notin E(G)$.  Then $bp_4\notin E(G)$, for
otherwise $\{p_3,p_4,b,p_5,z\}$ induces a bull, and $bd\notin E(G)$,
for otherwise $\{d,b,y,p_2,p_3\}$ induces a bull.  But now we see that
$b\in Y$, a contradiction.

\medskip

Now suppose that $G$ contains a $G_2$, with the same notation as
above.  Let $X=\{x\in V(G)\mid$ $xp_1,xp_3\in E(G)$ and
$xd,xp_5,xp_4\notin E(G)\}$ (so $a,p_2\in X$), and let $Y$ be the
vertex-set of the component of $G[X]$ that contains $a$ and $p_2$.
Since $Y$ is not a homogeneous set, there is a vertex $b \in V(G)
\setminus Y$ and two adjacent vertices $x, y \in Y$ such that $b$ is
adjacent to $x$ and not adjacent to $y$.  If $bp_4\notin E(G)$, then
$bp_3\in E(G)$, for otherwise $\{b,x,y,p_3,p_4\}$ induces a bull, and
$bp_1\in E(G)$, for otherwise $\{p_1,x,b,p_3,p_4\}$ induces a bull,
and $bp_5\notin E(G)$, for otherwise $\{y,p_1,b,p_5,p_4\}$ induces a
bull, and $bd\notin E(G)$, for otherwise $\{d,b,x,p_3,p_4\}$ induces a
bull; but then we see that $b\in Y$, a contradiction.  Hence $bp_4\in
E(G)$.  If $bp_5\in E(G)$, then $bd\in E(G)$, for otherwise $\{x, b,
p_4, p_5, d\}$ induces a bull, and $bp_3\notin E(G)$, for otherwise
$\{y,p_3,p_4,b,d\}$ induces a bull, and $bp_1\in E(G)$, for otherwise
$\{p_3,p_4,b,p_5,p_1\}$ induces a bull; but then $\{d,b,p_1,x,p_3\}$
induces a bull.  Hence $bp_5\notin E(G)$.  Then $bp_3\notin E(G)$, for
otherwise $\{y,p_3,b,p_4,p_5\}$ induces a bull, and $bp_1\in E(G)$,
for otherwise $\{b,x,y,p_1,p_5\}$ induces a bull; but then $\{p_5,
p_1,b,x,p_3\}$ induces a bull, a contradiction.
\end{proof}

\section{$(P_7,\mbox{bull})$-free graphs}
\label{sec:P7}

Before giving the proof of Theorem~\ref{thm:P7} we need another lemma.

\begin{lemma}\label{lem:c7p}
Let $G$ be a connected $(P_7,\mbox{bull})$-free graph.  Assume that
$G$ contains a $C_7$ but no $C_5$ and no $7$-wheel.  Then $V(G)$ can
be partitioned into seven non-empty sets $A_1,\ldots,A_7$ such that
for each $i\in\{1,\ldots,7\}$ $(\bmod~7)$ the set $A_i$ is complete to
$A_{i-1}\cup A_{i+1}$ and anticomplete to $A_{i-3}\cup A_{i-2}\cup
A_{i+2}\cup A_{i+3}$.
\end{lemma}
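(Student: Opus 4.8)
The plan is to fix an induced $C_7$ with vertices $c_1,\ldots,c_7$ and build the sets $A_1,\ldots,A_7$ around it, placing each $c_i$ in $A_i$. The natural definition is to let $A_i$ consist of all vertices whose neighborhood on the $C_7$ is exactly $\{c_{i-1},c_{i+1}\}$ (the pattern of $c_i$ itself). By Lemma~\ref{lem:c7n}, every vertex of $V(G)\setminus V(C)$ that has neighbors on $C$ is either a $1$-neighbor, a $2$-neighbor, or a $3$-neighbor (since $4,5,6$-neighbors are forbidden), and a $7$-neighbor is excluded by hypothesis (no $7$-wheel). So first I would show that \emph{every} vertex of $G$ lands in exactly one $A_i$; this requires ruling out the other attachment types and showing there are no vertices anticomplete to $C$.

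\textbf{Eliminating stray attachment types.} The main work is to exhaust the cases allowed by Lemma~\ref{lem:c7n}. A $3$-neighbor adjacent to $c_i,c_{i+1},c_{i+2}$ together with $c_{i+4},c_{i+5}$ would build a $P_7$ or a $C_5$; the $3$-neighbor adjacent to $c_i,c_{i+2},c_{i+4}$ should likewise be excluded using the $C_5$-freeness (such a vertex closes a $5$-cycle $c_i c_{i+2} c_{i+4}$ with two path edges, or creates an umbrella/parasol configuration forbidden by Lemmas~\ref{lem:umbr} and~\ref{lem:parasol}). For $2$-neighbors, Lemma~\ref{lem:c7n} allows neighbors $c_i,c_{i+2}$ or $c_i,c_{i+3}$; the first is the intended type (it fits into $A_{i+1}$), while $c_i,c_{i+3}$ must be shown impossible, again by producing a $C_5$ or a $P_7$. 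The $1$-neighbors are the delicate case: a vertex adjacent only to $c_i$ extends the path $c_{i+1}c_i\cdots$ around the cycle into an induced $P_7$ unless it has further neighbors off $C$ that fold it back; I expect to use $P_7$-freeness heavily and possibly Lemma~\ref{lem:nog12} to force such vertices to coincide with one of the $2$-neighbor types.

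\textbf{Connectivity and the completeness/anticompleteness relations.} Once every vertex is assigned to some $A_i$, I would verify the adjacency pattern in two stages. For nonadjacency, I must show $A_i$ is anticomplete to $A_{i\pm2}$ and $A_{i\pm3}$: an edge between such sets, combined with the $C_7$ and representative vertices, should yield a bull, a $C_5$, or a $P_7$. For completeness of $A_i$ to $A_{i\pm1}$, suppose $a\in A_i$ and $a'\in A_{i+1}$ are nonadjacent; then $a,a'$ together with a short arc of $C_7$ form a forbidden induced subgraph. The hard part will be proving each $A_i$ is nonempty and that the assignment is forced rather than merely consistent — in particular, controlling the $1$-neighbors and verifying that no vertex is anticomplete to the whole $C_7$, which uses connectivity of $G$ plus $P_7$-freeness to pull any far vertex back toward the cycle through a short induced path. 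I would handle nonemptiness last, since $c_i\in A_i$ already guarantees it, making that step immediate.
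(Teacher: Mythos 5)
There is a genuine gap, and it sits at the heart of your plan. You propose to define $A_i$ as the set of vertices whose neighborhood on a fixed $C_7$ is exactly $\{c_{i-1},c_{i+1}\}$, and to \emph{eliminate} all other attachment types, in particular the $3$-neighbors adjacent to three consecutive vertices $c_{i-1},c_i,c_{i+1}$. But such vertices cannot be eliminated: if $x$ is adjacent to exactly $c_1,c_2,c_3$ on the $C_7$, the graph induced by $V(C)\cup\{x\}$ contains no induced $P_7$ (the longest induced path is a $P_6$, e.g.\ $x$-$c_1$-$c_7$-$c_6$-$c_5$-$c_4$, which cannot be extended since $c_3$ is adjacent to both ends) and no $C_5$ (every cycle through $x$ other than the original $C_7$ has length at most $4$), so your claimed contradiction ``with $c_{i+4},c_{i+5}$'' does not materialize. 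The paper's proof explicitly \emph{retains} this attachment type and places such a vertex in the middle set $A_i$; this is consistent with the lemma because the conclusion says nothing about edges inside an $A_i$, so $A_i$ need not be a stable set. With your definition these vertices belong to no $A_j$, and the partition fails. A secondary problem: you lean on Lemmas~\ref{lem:umbr}, \ref{lem:parasol} and~\ref{lem:nog12}, but those require $G$ to be \emph{prime}, which is not a hypothesis of Lemma~\ref{lem:c7p} (in its application $G$ is a component of a non-neighborhood, which is typically not prime); this is exactly why the statement carries the explicit hypothesis ``no $7$-wheel'' instead of deducing it from primeness. Moreover Lemma~\ref{lem:nog12} is vacuous here anyway, since $G_1$ and $G_2$ contain a $C_5$ and the hypothesis already forbids $C_5$'s.

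The paper also organizes the argument differently in a way you should note, because it absorbs the part you call ``the hard part.'' Rather than fixing one $C_7$ and classifying all vertices against it (which forces you to chase vertices at distance $\ge 2$ from the cycle by ad hoc induced-path arguments), the paper takes seven disjoint nonempty sets with the desired adjacency pattern, chosen to maximize their union $U$, and derives a contradiction from a vertex $x\notin U$ with a neighbor in $U$: choosing the cycle representatives $c_i\in A_i$ so that $x$ sees the resulting $C_7$, ruling out $k=1$ (gives a $P_7$), the $\{c_i,c_{i+3}\}$ and $\{c_i,c_{i+2},c_{i+4}\}$ patterns (each gives a $C_5$), and then swapping representatives to show $x$ is complete to $A_{i-1}\cup A_{i+1}$ and anticomplete to the other four sets, so that $A_i\cup\{x\}$ contradicts maximality. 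You would need to adopt something like this maximality device (or a careful distance induction) to make the ``no vertex is anticomplete to the whole $C_7$'' step rigorous.
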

\begin{proof}
Since $G$ contains a $C_7$, there exist seven pairwise disjoint and
non-empty sets $A_1,\ldots,A_7\subset V(G)$ such that for each
$i\in\{1,\ldots,7\}$ $(\bmod~7)$ the set $A_i$ is complete to
$A_{i-1}\cup A_{i+1}$ and anticomplete to $A_{i-3}\cup A_{i-2}\cup
A_{i+2}\cup A_{i+3}$.  We choose these sets so as to maximize their
union $U=A_1\cup\cdots\cup A_7$.  Hence we need only prove that
$V(G)=U$, so suppose the contrary.  Since $G$ is connected, there is a
vertex $x$ in $V(G)\setminus U$ that has a neighbor in $U$.  For each
$i\in\{1,\ldots,7\}$ pick a vertex $c_i\in A_i$ so that $x$ has a
neighbor in the cycle $C$ induced by $\{c_1, \ldots, c_7\}$.  So $x$
is a $k$-neighbor of $C$ for some $k>0$.  Since $G$ contains no
$7$-wheel, and by Lemma~\ref{lem:c7n}, we have $k\in\{1,2,3\}$.  If
$k=1$, say $x$ is adjacent to $c_1$, then
$x$-$c_1$-$c_2$-$c_3$-$c_4$-$c_5$-$c_6$ is an induced $P_7$.  If $k=2$
and $x$ is adjacent to $c_i$ and $c_{i+3}$ for some $i$, then $\{x,
c_i, c_{i+1}, c_{i+2}, c_{i+3}\}$ induces a $C_5$.  If $k=3$ and $x$
is adjacent to $c_i$, $c_{i+2}$ and $c_{i+4}$ for some $i$, then $\{x,
c_i, c_{i-1}, c_{i-2}, c_{i-3}\}$ induces a $C_5$.  Therefore, by
Lemma~\ref{lem:c7n}, it must be that $N_C(x)$ is equal to either
$\{c_{i-1}, c_{i+1}\}$ or $\{c_{i-1}, c_i, c_{i+1}\}$ for some $i$,
say $i=7$.  Pick any $c'\in A_1\setminus \{c_1\}$ and let $C'$ be the
cycle induced by $(V(C)\setminus\{c_1\})\cup\{c'\}$.  Then by the same
arguments applied to $C'$ and $x$, we deduce that $x$ is adjacent to
$c'$.  So $x$ is complete to $A_1$, and similarly $x$ is complete to
$A_6$.  Likewise, Lemma~\ref{lem:c7n} and the fact that $G$ is
$C_5$-free implies that $x$ has no neighbor in $A_2\cup A_3\cup
A_4\cup A_5$.  But now the sets $A_1,\ldots,A_6,A_7\cup\{x\}$
contradict the maximality of $U$.  So $V(G)=U$ and the lemma holds.
\end{proof}

Now we can prove the main result of this section.

\medskip

\noindent{\bf Proof of Theorem~\ref{thm:P7}.} Let $G$ be a
$(P_7,\mbox{bull})$-free graph, and let $w$ be a weight function on
the vertex set of $G$.  By Theorem~\ref{thm:LM}, we may assume that
$G$ is prime.  By Lemmas~\ref{lem:wheel}---\ref{lem:nog12}, $G$
contains no $k$-wheel and no $k$-antiwheel for any $k\ge 6$, no
umbrella, no parasol, no $G_1$ and no $G_2$.  To find the maximum
weight stable set in $G$ it is sufficient to compute, for every vertex
$c$ of $G$, a maximum weight stable set containing $c$, and to choose
the best set over all $c$.  So let $c$ be any vertex in $G$.  The
maximum weight of a stable set that contains $c$ is equal to
$w(c)+\sum_{K} \alpha_w(K)$, where the sum is over all components $K$
of $G\setminus(\{c\}\cup N(c))$ (the non-neighborhood of $c$) and
$\alpha_w(K)$ is the maximum weight of any stable set in $K$.  So let
$K$ be an arbitrary component of $G\setminus(\{c\}\cup N(c))$.  If $K$
is perfect, we can use the algorithm from~\cite{Penev} to find a
maximum weight stable set in $K$.  Therefore let us assume that $K$ is
not perfect.  We note that $K$ contains no antihole of length at least
$6$, for otherwise the union of such a subgraph with $c$ forms an
antiwheel.  Hence, by the Strong Perfect Graph Theorem \cite{CRST},
and since $G$ is $P_7$-free, $K$ contains a $C_5$ or a $C_7$.

Since $G$ is prime it is connected, so there is a neighbor $d$ of $c$
that has a neighbor in $K$.  Let $H=N_K(d)$ and $Z=V(K)\setminus H$.
We claim that every $C_5$ in $K$ contains at most two vertices from
$H$, and if it contains two they are non-adjacent.  Indeed, in the
opposite case, there is a $C_5$ in $K$ with vertices $v_1,\ldots,v_5$
and edges $v_iv_{i+1}$ ($\bmod~5$) such that $v_1,v_2\in H$.  Then
$v_3\in H$, for otherwise $\{c,d,v_1,v_2,v_3\}$ induces a bull; and
similarly $v_4,v_5\in H$; but then $\{v_1,\ldots,v_5,d,c\}$ induces an
umbrella, which contradicts Lemma~\ref{lem:umbr}.  So the claim is
established.  Henceforth, for $q\in\{0,1,2\}$ we say that a $C_5$ in
$K$ is of type~$q$ if it contains exactly $q$ vertices from $H$.  So
every $C_5$ in $K$ is of type~$0$, $1$ or $2$, and if it is of
type~$2$ its two vertices from $H$ are non-adjacent.  Our proof
follows the pattern from \cite{BM}, but in some parts we will use
different arguments.

\bigskip

\noindent{\bf Case 1: $K$ contains a $C_7$ and no $C_5$.}

Since $K$ is connected and contains no $7$-wheel, Lemma~\ref{lem:c7p}
implies that $V(K)$ can be partitioned into seven non-empty sets
$A_1,\ldots,A_7$ such that for each $i\in\{1,\ldots,7\}$ ($\bmod 7$)
the set $A_i$ is complete to $A_{i-1}\cup A_{i+1}$ and anticomplete to
$A_{i-3}\cup A_{i-2}\cup A_{i+2}\cup A_{i+3}$.  Clearly we have
$\alpha_w(K)=\max_{i\in\{1,\ldots,7\}} \{\alpha_w(G[A_i])
+\alpha_w(G[A_{i+2}]) +\alpha_w(G[A_{i+4}])\}$, so we need only
compute $\alpha_w(G[A_i])$ for each $i\in\{1,\ldots,7\}$.  For each
$i$ pick a vertex $a_i\in A_i$.  The graph $G[A_i]$ contains no $C_5$,
no $P_5$ and no $\overline{P_5}$, for otherwise adding $a_{i+1}$ and
either $a_{i+2}$ or $a_{i+3}$ to such a subgraph we obtain an umbrella
or a parasol in $G$ or $\overline{G}$, which contradicts
Lemmas~\ref{lem:umbr} and~\ref{lem:parasol}.  By results from
\cite{CHMW} and \cite{HoangPO}, MWSS can be solved in time $O(n^3)$ in
graphs with no $C_5$, $P_5$ and $\overline{P_5}$.  Hence, since the
$A_i$'s are pairwise disjoint, MWSS can be solved in time
$O(|V(K)|^3)$ in $K$.

\bigskip

\noindent{\bf Case 2: $K$ contains a $C_5$ of type~$2$ and no $C_5$ of
type~$1$ or~$0$.}

For adjacent vertices $u,v$ in $Z$ we say that the edge $uv$ is
\emph{red} if there exists a $P_4$ $h'$-$u$-$v$-$h''$ for some
$h',h''\in H$.  For every vertex $h$ in $H$ we define its
\emph{score}, $sc(h)$, as the number of red edges that contain a
neighbor of $h$.  Let $h$ be a vertex of maximum score in $H$.
\begin{equation}\label{xmax0}
\longbox{Suppose that $K\setminus N(h)$ contains a $C_5$ of type~$2$
$t$-$h_1$-$a$-$b$-$h_2$-$t$, with $h_1,h_2\in H$ and $a,b,t\in Z$.
Then $hh_1, hh_2\notin E(G)$, and $Z$ contains vertices
$y_1,z_1,y_2,z_2$ such that $y_1z_1, y_2z_2, hy_1,hy_2\in E(G)$,
$hz_1,hz_2, h_1y_1,h_1z_1, h_2y_2,h_2z_2\notin E(G)$, and, up to
symmetry, $\{y_1,y_2\}$ is complete to~$a$ and anticomplete to~$b$,
and $\{z_1,z_2\}$ is anticomplete to $a$, and $bz_2\in E(G)$.}
\end{equation}
Proof: Clearly $h\notin\{h_1,h_2\}$.  Note that $ab$ is a red edge.
There must be a red edge $y_1z_1$ (with $y_1,z_1\in Z$) that is
counted in $sc(h)$ and not in $sc(h_1)$, for otherwise we have
$sc(h_1)\ge sc(h)+1$ (because of $ab$), which contradicts the choice
of $h$.  So $h_1$ has no neighbor in $\{y_1,z_1\}$.  We may assume
that $hy_1\in E(G)$.  Let $h'$-$y_1$-$z_1$-$h''$ be a $P_4$ with
$h',h''\in H$.  If $hz_1\in E(G)$, then $hh'\notin E(G)$, for
otherwise $\{c,d,h',h,z_1\}$ induces a bull; and similarly $hh''\notin
E(G)$; but then $\{h',y_1,h,z_1,h''\}$ induces a bull.  Hence
$hz_1\notin E(G)$.  Clearly $a\notin\{y_1,z_1\}$.  If $a$ has no
neighbor in $\{y_1,z_1\}$, then $b$ has a neighbor in $\{y_1,z_1\}$,
for otherwise $b$-$a$-$h_1$-$d$-$h$-$y_1$-$z_1$ is an induced $P_7$;
and $b$ is adjacent to both $y_1,z_1$, for otherwise
$\{c,d,h_1,a,b,y_1,z_1\}$ induces a $P_7$; but then
$\{h,y_1,z_1,b,a\}$ induces a bull, a contradiction.  So $a$ has a
neighbor in $\{y_1,z_1\}$.  If $a$ is adjacent to both $y_1,z_1$, then
$\{h,y_1,z_1,a,h_1\}$ induces a bull.  So $a$ has exactly one neighbor
in $\{y_1,z_1\}$, which leads to the following two cases: \\
--- (i) $ay_1\in E(G)$ and $az_1\notin E(G)$.  Then also $y_1b\notin
E(G)$, for otherwise $\{h,y_1,b,a,h_1\}$ induces a bull.  \\
--- (ii) $az_1\in E(G)$ and $ay_1\notin E(G)$.  Then also $z_1b\notin
E(G)$, for otherwise either $\{h_1,a,z_1,b,h_2\}$ induces a bull (if
$z_1h_2\notin E(G)$), or $\{c,d,h_1,a,z_1,b,h_2\}$ induces a $G_2$ (if
$z_1h_2\in E(G)$), which contradicts Lemma~\ref{lem:nog12}.  Moreover,
$y_1b\in E(G)$, for otherwise $c$-$d$-$h$-$y_1$-$z_1$-$a$-$b$ is an
induced $P_7$.  \\
Similarly, there is a red edge $y_2z_2$ (with $y_2,z_2\in Z$) that is
counted in $sc(h)$ and not in $sc(h_2)$, so $h_2$ has no neighbor in
$\{y_2,z_2\}$.  We may assume that $hy_2\in E(G)$, and by the same
argument as above we have $hz_2\notin E(G)$ and either: \\
--- (iii) $by_2\in E(G)$, $bz_2\notin E(G)$, and $y_2a\notin E(G)$, or \\
--- (iv) $bz_2\in E(G)$, $by_2\notin E(G)$, $z_2a\notin E(G)$, and
$y_2a\in E(G)$.  \\
Now if either (i) and (iii) occur, or (ii) and (iv) occur, then either
$\{d,h,y_1,y_2,a\}$ induces a bull (if $y_1y_2\in E(G)$) or
$\{h,y_1,y_2,a,b\}$ induces a $C_5$ of type~$1$ (if $y_1y_2\notin
E(G)$), a contradiction.  Therefore we may assume, up to symmetry,
that (i) and (iv) occur.  Thus (\ref{xmax0}) holds.

\medskip

Now we claim that:
\begin{equation}\label{xmax}
\longbox{If $K\setminus N(h)$ contains a $C_5$ of type~$2$, with the
same notation as in (\ref{xmax0}), then $K\setminus (N(h)\cup N(a))$
contains no $C_5$ of type~$2$.}
\end{equation}
Proof: Let $y_1,z_1,y_2,z_2$ be vertices of $Z$ as in (\ref{xmax0}).
Suppose that $K\setminus (N(h)\cup N(a))$ contains a $C_5$ of type~$2$
$t'$-$h_3$-$a'$-$b'$-$h_4$-$t'$, with $h_3,h_4\in H$ and $t',a',b'\in
Z$.  By the analogue of (\ref{xmax0}) there exist vertices $y_4,z_4$
in $Z$ such that $y_4z_4, hy_4\in E(G)$, $hz_4, h_4y_4,h_4z_4\notin
E(G)$, and, up to symmetry, $y_4a', z_4b'\in E(G)$ and
$y_4b',z_4a'\notin E(G)$.  We have $y_4a\notin E(G)$, for otherwise
$c$-$d$-$h_4$-$b'$-$a'$-$y_4$-$a$ is an induced $P_7$; and
$y_4y_1\notin E(G)$, for otherwise $\{d,h,y_4,y_1,a\}$ induces a bull;
and $y_4b\notin E(G)$, for otherwise $\{h,y_1,a,b,y_4\}$ induces a
$C_5$ of type~$1$.  Then $ba'\notin E(G)$, for otherwise
$c$-$d$-$h$-$y_4$-$a'$-$b$-$a$ is an induced $P_7$.  If $y_1b'\in
E(G)$, then $y_1z_4\in E(G)$, for otherwise $\{h,y_1,b',z_4,y_4\}$
induces a $C_5$ of type~$1$, and $y_1h_4\in E(G)$, for otherwise
$\{h,y_1,z_4,b',h_4\}$ induces a bull; but then $\{d,h_4,b',y_1,a\}$
induces a bull.  So $y_1b'\notin E(G)$.  Then $bb'\notin E(G)$, for
otherwise $c$-$d$-$h$-$y_1$-$a$-$b$-$b'$ is an induced $P_7$.  Then
$a'y_1\in E(G)$, for otherwise $b$-$a$-$y_1$-$h$-$y_4$-$a'$-$b'$ is an
induced $P_7$.  Then $h_3y_1\notin E(G)$, for otherwise
$\{d,h_3,a',y_1,a\}$ induces a bull, and $h_3b\notin E(G)$, for
otherwise $\{h_3,a',y_1,a,b\}$ induces a $C_5$ of type~$1$.  But then
$c$-$d$-$h_3$-$a'$-$y_1$-$a$-$b$ is an induced $P_7$, a contradiction.
Thus (\ref{xmax}) holds.

\bigskip

\noindent{\bf Case 3: $K$ contains a $C_5$ of type~$0$ or~$1$.}

We will prove that:
\begin{equation}\label{case2}
\longbox{There is a vertex $x\in V(K)$ such that $K\setminus N(x)$
contains no $C_5$ of type~$0$ or~$1$.}
\end{equation}

We first make some remarks about the $C_5$'s of type~$1$ and make a
few more claims.  Let $H_1=\{h\in H\mid$ $h$ lies in a $C_5$ of
type~$1\}$.
\begin{equation}\label{a1234}
\longbox{Let $h\in H_1$, and let $C = h$-$p_1$-$p_2$-$p_3$-$p_4$-$h$
be any $C_5$ of type~$1$ that contains~$h$.  Let $a$ be any vertex in
$Z$.  Then either $N_C(a)$ is a stable set, or $N_C(a)=
\{p_1,p_2,p_3,p_4\}$.}
\end{equation}
Proof: Suppose that $N_C(a)$ is not a stable set.  If $a$ is adjacent
to $h$ and one of $p_1,p_4$, say $ap_1\in E(G)$, then $ap_2\in E(G)$,
for otherwise $\{d,h,a,p_1,p_2\}$ induces a bull, and $ap_3\notin
E(G)$, for otherwise $\{d,h,p_1,a,p_3\}$ induces a bull, and
$ap_4\notin E(G)$, for otherwise $\{d,h,p_4,a,p_2\}$ induces a bull.
But then $\{p_1,p_2,p_3,p_4,h,d,a\}$ induces a $G_1$, which
contradicts Lemma~\ref{lem:nog12}.  Now suppose that $ah\notin E(G)$.
If $a$ is adjacent to $p_2$ and $p_3$, then $a$ also has a neighbor in
$\{p_1,p_4\}$, for otherwise $\{p_1,p_2,a,p_3,p_4\}$ induces a bull.
So in any case, up to symmetry, we may assume that $a$ is adjacent to
$p_1$ and $p_2$.  Then $ap_3\in E(G)$, for otherwise
$\{h,p_1,a,p_2,p_3\}$ induces a bull, and $ap_4\in E(G)$, for
otherwise $\{p_1,p_2,p_3,p_4,h,d,a\}$ induces a $G_2$, which
contradicts Lemma~\ref{lem:nog12}.  Thus (\ref{a1234}) holds.

\begin{equation}\label{match}
\longbox{Let $h\in H_1$, and let $C = h$-$t$-$u$-$v$-$w$-$h$ be any
$C_5$ of type~$1$ that contains~$h$.  Suppose that
$C'=h'$-$t'$-$u'$-$v'$-$w'$-$h'$ is a $C_5$ of type~$1$ in which $h$
has no neighbor, with $h'\in H$.  Then either $N_{C'}(t)=\{u',w'\}$
and $N_{C'}(w)=\{t',v'\}$, or vice-versa.}
\end{equation}
Proof: Clearly $h\neq h'$.  Let $Y=\{t,u,v,w\}$ and
$Y'=\{t',u',v',w'\}$.  Suppose that $\{t,w\}$ is anticomplete to $Y'$.
Then $h'w\in E(G)$, for otherwise $w$-$h$-$d$-$h'$-$w'$-$v'$-$u'$ is
an induced $P_7$, and similarly $h't\in E(G)$.  If $h'u\in E(G)$, then
$ut'\notin E(G)$ (by (\ref{a1234}) applied to $C'$ and $u$), but then
$\{h,t,u,h',t'\}$ induces a bull.  So $h'u\notin E(G)$, and similarly
$h'v\notin E(G)$.  Then one of $u,v$, say $u$, has a neighbor in $Y'$,
for otherwise $u$-$v$-$w$-$h'$-$w'$-$v'$-$u'$ is an induced $P_7$;
moreover $u$ is complete to $Y'$, for otherwise $c,d,h,t,u$ plus two
vertices from $Y'$ induce a $P_7$.  Then $v$ has no neighbor $y'\in
Y'$, for otherwise $\{t,u,y',v,w\}$ induces a bull; but then
$\{h',t',u',u,v\}$ induces a bull.  So $\{t,w\}$ is not anticomplete
to $Y'$, and we may assume up to symmetry that $w$ has a neighbor in
$Y'$.  \\
We have $|N_{Y'}(w)|\ge 2$ and $N_{Y'}(w)\neq\{t',w'\}$, for otherwise
$c,d,h,w$ plus three vertices from $Y'$ induce a $P_7$; and $w$ is not
complete to $Y'$, for otherwise, by (\ref{a1234}), $\{h,w,v',w',h'\}$
induces a bull.  Hence, by (\ref{a1234}) and up to symmetry, we have
$N_{C'}(w)=\{t',v'\}$.  Since $t'h\notin E(G)$, we have $t'v\notin
E(G)$, for otherwise, by (\ref{a1234}), $\{h,w,v,t',h'\}$ induces a
bull.  If also $t$ has a neighbor in $Y'$, then by the same argument
as with $w$ we have either (i) $N_{C'}(t)=\{u',w'\}$ or (ii)
$N_{C'}(t)=\{t',v'\}$.  In case (i) we obtain the desired result, so
assume that (ii) holds.  By (\ref{a1234}), $t'u\notin E(G)$.  Then
$h'$ has a neighbor in $\{u,v\}$, for otherwise
$c$-$d$-$h'$-$t'$-$w$-$v$-$u$ is an induced $P_7$; say $h'u\in E(G)$.
Then $h'v\notin E(G)$, for otherwise $\{t,u,h',v,w\}$ induces a bull.
Then $v'$ has neighbor in $\{u,v\}$, for otherwise
$c$-$d$-$h'$-$u$-$v$-$w$-$v'$ is an induced $P_7$; and by
(\ref{a1234}) we have $N_C(v')=Y$.  But then $\{h,t,v',u,h'\}$ induces
a bull, a contradiction.  So we may assume that $t$ has no neighbor in
$Y'$.  Then $th'\in E(G)$, for otherwise
$t$-$h$-$d$-$h'$-$t'$-$u'$-$v'$ is an induced $P_7$; and $uh'\notin
E(G)$, for otherwise by (\ref{a1234}), $N_C(h') = Y$, which would
imply $N_{C'}(w) \neq \{v', t'\}$; and $vh'\in E(G)$, for otherwise
$c$-$d$-$h'$-$t$-$u$-$v$-$w$ is an induced $P_7$.  By (\ref{a1234}) we
have $|N_{Y'}(v)|\le 1$ and $N_{Y'}(v)\subset\{u',v'\}$.  We have $vv'
\notin E(G)$, for otherwise $\{h, w, v, v', u'\}$ induces a bull, so
we have $vu'\in E(G)$, for otherwise $c$-$d$-$h'$-$v$-$w$-$v'$-$u'$ is
an induced $P_7$.  Then $uu'\notin E(G)$ by (\ref{a1234}) (since
$wu'\notin E(G)$).  But then $c$-$d$-$h$-$t$-$u$-$v$-$u'$ is an
induced $P_7$.  Thus (\ref{match}) holds.

\medskip

Now we deal with $C_5$'s of type~$0$.  Clearly any such $C_5$ lies in
a component of $G[Z]$, and any such component has a neighbor in $H$
since $G$ is connected.
\begin{equation}\label{z0}
\longbox{Let $T$ be any component of $G[Z]$ that contains a $C_5$, let
$C$ be any $C_5$ in $T$, and let $h$ be any vertex in $H$ that has
a neighbor in $T$.  Then $h$ is a $2$-neighbor of $C$.}
\end{equation}
Proof: There is a shortest path $p_0$-$p_1$-$p_2$-$\cdots$-$p_r$ such
that $p_0 = c$, $p_1 = d$, $p_2 = h$ and $p_r$ has a neighbor in $C$,
and $r \geq 2$.  By Lemma~\ref{lem:c5n}, $p_r$ is either a
$1$-neighbor, a $2$-neighbor or a $5$-neighbor of $C$.  If $p_r$ is a
$5$-neighbor, then $V(C)\cup\{p_r,p_{r-1}\}$ induces an umbrella,
which contradicts Lemma~\ref{lem:umbr}.  If $p_r$ is a $1$-neighbor of
$C$, then $p_{r-2}, p_{r-1}, p_r$ and four vertices of $C$ induce a
$P_7$.  So $p_r$ is a $2$-neighbor of $C$.  Now if $r\ge 3$, then
$p_{r-3}, p_{r-2}, p_{r-1}, p_r$ and three vertices of $C$ induce a
$P_7$.  So $r=2$, and (\ref{z0}) holds.

\begin{equation}\label{z01}
\mbox{At most one component of $G[Z]$ contains a $C_5$.}
\end{equation}
Proof: Suppose that two components $T$ and $T'$ of $G[Z]$ contain a
$C_5$.  Let $C$ a $C_5$ in $T$, with vertices $c_1, \ldots, c_5$ and
edges $c_ic_{i+1}$ ($\bmod~5$), and let $C'$ a $C_5$ in $T'$, with
vertices $c'_1, \ldots, c'_5$ and edges $c'_ic'_{i+1}$ ($\bmod~5$).
Pick any $h \in H$ that has a neighbor in $T$, and pick any $h'$ in
$H$ that has a neighbor in $T'$.  By (\ref{z0}) and
Lemma~\ref{lem:c5n} we may assume that $N_C(h)=\{c_1,c_4\}$ and
$N_{C'}(h')=\{c'_1,c'_4\}$.  If $h$ has a neighbor in $T'$, then, by
(\ref{z0}) and Lemma~\ref{lem:c5n}, we have $N_{C'}(h)
=\{c'_j,c'_{j+2}\}$ for some $j$.  But then
$c_3$-$c_2$-$c_1$-$h$-$c'_j$-$c'_{j-1}$-$c'_{j-2}$ is an induced
$P_7$.  So $h$ has no neighbor in $T'$, and similarly $h'$ has no
neighbor in $T$.  Then either $c_3$-$c_2$-$c_1$-$h$-$d$-$h'$-$c'_1$ or
$c_3$-$c_2$-$c_1$-$h$-$h'$-$c'_1$-$c'_2$ is an induced $P_7$.  So
(\ref{z01}) holds.

\begin{equation}\label{hinz0}
\longbox{If a component $T$ of $G[Z]$ contains a $C_5$, and $h$ is any
vertex in $H$ that has a neighbor in $T$, then $K\setminus N(h)$ has
no $C_5$ of type~$0$ or~$1$.}
\end{equation}
Proof: By (\ref{z0}) and~(\ref{z01}), $K\setminus N(h)$ has no $C_5$
of type~$0$.  So suppose that there is a $C_5$ of type~$1$
$C'=h'$-$t'$-$u'$-$v'$-$w'$-$h'$ (with $h'\in H$) in which $h$ has no
neighbor.  Let $C$ be a $C_5$ in $T$, with vertices $c_1,\ldots,c_5$
and edges $c_ic_{i+1}$ ($\bmod~5$).  By (\ref{z0}) and
Lemma~\ref{lem:c5n}, we may assume that $N_C(h)=\{c_1,c_4\}$.  Let
$C_h=h$-$c_1$-$c_2$-$c_3$-$c_4$-$h$; so $C_h$ is a $C_5$ of type~$1$.
By (\ref{match}) and up to symmetry, we have $N_{C'}(c_1)=\{t',v'\}$
and $N_{C'}(c_4)=\{u',w'\}$, and $t',u',v',w'\in T$.  Then $c_5$ has a
neighbor in $\{u',v'\}$, for otherwise $\{c_1,v',u',c_4,c_5\}$ induces
a $C_5$ of type~$0$ in which $h'$ has at most one neighbor,
contradicting (\ref{z0}).  If $c_5u' \in E(G)$, then $c_5v' \in E(G)$,
for otherwise $\{h, c_4, c_5, u', v'\}$ induces a bull.  If $c_5v' \in
E(G)$, then $c_5u' \in E(G)$, for otherwise $\{h, c_1, c_5, v', u'\}$
induces a bull.  In both cases, by (\ref{a1234}), $c_5$ is complete to
$\{t', u', v', w'\}$.  But then $\{h,c_1,t',c_5,w'\}$ induces a bull.
Thus (\ref{hinz0}) holds.

\begin{equation}\label{u}
\longbox{Suppose that there is no $C_5$ of type~$0$.  Pick any $h\in
H_1$, and suppose that there is a $C_5$ of type~$1$
$C'=h'$-$b_2$-$u$-$v$-$a_2$-$h'$ in which $h$ has no neighbor.  Then
$K\setminus N(u)$ has no $C_5$ of type~$1$.}
\end{equation}
Proof: Let $h$-$a_1$-$v'$-$u'$-$b_1$-$h$ be any $C_5$ of type~$1$ that
contains $h$.  By (\ref{match}), we may assume that
$N_{C'}(a_1)=\{b_2,v\}$ and $N_{C'}(b_1)=\{a_2,u\}$.  Let
$C=h$-$a_1$-$v$-$u$-$b_1$-$h$; then $C$ is a $C_5$ of type~$1$ in
which $h'$ has no neighbor, so $h$ and $h'$ play symmetric roles.  Let
$C_{a_1}= h$-$a_1$-$b_2$-$u$-$b_1$-$h$ and $C_{a_2}=
h'$-$a_2$-$b_1$-$u$-$b_2$-$h'$.  Suppose that there is a $C_5$ of
type~$1$ $C''=h''$-$t''$-$u''$-$v''$-$w''$-$h''$ in which $u$ has no
neighbor.  Let $X=\{a_1,b_1,a_2,b_2,u,v\}$ and
$Y''=\{t'',u'',v'',w''\}$. \\
We observe that $G[X\cup Y'']$ is bipartite: indeed in the opposite
case, and since $K$ contains no $C_5$ of type~$0$ and no $C_7$, there
is a triangle in $G[X\cup Y'']$, and so there is either (i) a vertex
$y''\in Y''$ with two adjacent neighbors in $X$, or (ii) a vertex
$x\in X$ with two adjacent neighbors in $Y''$.  In case~(i), by
(\ref{a1234}) applied to $y''$ and the cycles $C, C', C_{a_1},
C_{a_2}$, we see that $y''$ is complete to $X$, which is not possible
since $uy''\notin E(G)$.  So suppose we have case~(ii).  By
(\ref{a1234}) we have $N_{C''}(x)=Y''$.  Clearly $x\neq u$.  Moreover,
$x\notin\{b_1,b_2,v\}$, for otherwise $\{u,x,v'',w'',h''\}$ induces a
bull.  So, up to symmetry, $x=a_1$.  By case (i) we have
$v''b_2,w''b_2\notin E(G)$; but then $\{h'',w'',v'',a_1,b_2\}$ induces
a bull.  So $G[X\cup Y'']$ is bipartite.  Let $A,B$ be a bipartition
of $X\cup Y''$ in two stable sets.  Up to symmetry we may assume that
$A=\{a_1,a_2,u,u'',w''\}$ and $B=\{b_1,b_2,v,t'',v''\}$.  \\
Note that $h''$ has a neighbor in $C$, for otherwise (\ref{match}) is
contradicted (since $u$ has no neighbor in $\{t'',w''\}$), and
similarly $h''$ has a neighbor in $C'$, in $C_{a_1}$ and in $C_{a_2}$.
Suppose that $h''a_1\in E(G)$.  Then $h''b_2\notin E(G)$, for
otherwise $\{d,h'',a_1,b_2,u\}$ induces a bull, and $h''b_1\in E(G)$,
for otherwise $c$-$d$-$h''$-$a_1$-$b_2$-$u$-$b_1$ is an induced $P_7$,
and $h''a_2\notin E(G)$, for otherwise $\{d,h'',a_2,b_1,u\}$ induces a
bull, and $h''h'\notin E(G)$, for otherwise $\{c,d,h'',h',a_2\}$
induces a bull.  By (\ref{a1234}), $h''$ is not adjacent to $v$.  But
then $h''$ has no neighbor in $C'$, a contradiction.  So $h''a_1\notin
E(G)$, and similarly $h''a_2\notin E(G)$.  So $h''\notin\{h,h'\}$;
moreover $h''h\notin E(G)$, for otherwise $\{c,d,h'',h,a_1\}$ induces
a bull, and similarly $h''h'\notin E(G)$.  Then $h''$ has a neighbor
in $\{b_1,b_2\}$, say $h''b_1\in E(G)$, for otherwise $h''$ has no
neighbor in $C_{a_1}$; and then $h''b_2\in E(G)$, for otherwise
$c$-$d$-$h''$-$b_1$-$u$-$b_2$-$a_1$ is an induced $P_7$, and $h''v\in
E(G)$, for otherwise $c$-$d$-$h''$-$b_1$-$a_2$-$v$-$a_1$ is an induced
$P_7$.  So $N_X(h'')=\{b_1,b_2,v\}$.  By (\ref{a1234}), $b_1,b_2$ and
$v$ have no neighbor in $\{t'',w''\}$; and since $B$ is a stable set
they are not adjacent to $v''$.  \\
Suppose that $b_1u''\in E(G)$.  Then $a_1v''\notin E(G)$, for
otherwise $c$-$d$-$h''$-$b_1$-$u''$-$v''$-$a_1$ is an induced $P_7$,
and $hu''\notin E(G)$, for otherwise $\{d,h,u'',b_1,u\}$ induces a
bull.  Then $h$ has exactly one neighbor in $\{v'',w''\}$, for
otherwise either $c$-$d$-$h$-$b_1$-$u''$-$v''$-$w''$ is an induced
$P_7$ or $\{d,h,w'',v'',u''\}$ induces a bull.  However, if $hw''\in
E(G)$, then $b_2u''\in E(G)$, for otherwise
$u''$-$v''$-$w''$-$h$-$a_1$-$b_2$-$u$ is an induced $P_7$, and then
$c$-$d$-$h$-$w''$-$v''$-$u''$-$b_2$ is an induced $P_7$; while if
$hv''\in E(G)$, then $u''v\notin E(G)$, for otherwise
$c$-$d$-$h$-$v''$-$u''$-$v$-$u$ is an induced $P_7$, and then
$u''$-$v''$-$h$-$d$-$h''$-$v$-$u$ is an induced $P_7$, a
contradiction.  Hence $b_1u''\notin E(G)$ and, by symmetry, $b_1$ and
$b_2$ have no neighbor in $Y''$.  \\
If $vu''\in E(G)$, then $hu''\in E(G)$, for otherwise
$c$-$d$-$h$-$b_1$-$u$-$v$-$u''$ is an induced $P_7$, but then
$c$-$d$-$h$-$u''$-$v$-$u$-$b_2$ is an induced $P_7$.  So $vu''\notin
E(G)$, and so $v$ has no neighbor in $Y''$.  Then $a_1v''\notin E(G)$,
for otherwise $c$-$d$-$h''$-$v$-$a_1$-$v''$-$u''$ is an induced $P_7$;
and $a_1t''\notin E(G)$, for otherwise
$b_1$-$u$-$v$-$a_1$-$t''$-$u''$-$v''$ is an induced $P_7$.  Hence, by
symmetry, $a_1$ and $a_2$ have no neighbor in $Y''$.  Now $h$ has a
neighbor in $\{t'',u'',v'',w''\}$, for otherwise
$a_1$-$h$-$d$-$h''$-$t''$-$u''$-$v''$ is an induced $P_7$.  If $h$ has
two adjacent neighbors in $Y''$, then $h$ is complete to $Y''$, for
otherwise $d,h$ plus three consecutive vertices of $Y''$ induce a
bull; but then $\{h'',t'',u'',h,a_1\}$ induces a bull.  So we may
assume that $N_{C''}(h)=\{t'',v''\}$, for otherwise $u,v,a_1,h$ and
three consecutive vertices in $Y''$ induce a $P_7$.  But then
$u''$-$v''$-$h$-$d$-$h''$-$v$-$u$ is an induced $P_7$, a
contradiction.  Thus (\ref{u}) holds.

\medskip

Now, (\ref{case2}) follows from (\ref{hinz0}) and (\ref{u}).  This
completes the proof in Case~3.

\medskip

To conclude, we give the general outline of the algorithm to solve
MWSS in $K$.  For each type $q\in\{0,1,2\}$, we find a vertex $x$ such
that $K \setminus N(x)$ contains no $C_5$ of type~$q$.  We then solve
the MWSS in $K \setminus N(x)$ and in $K \setminus \{x\}$.  Since
every maximum weight stable set of $K$ either contains $x$ or not, the
best of these two solutions is a solution for the MWSS in $K$.  We
repeat this until there are no more $C_5$'s of this type.  More
formally:

\medskip


\noindent \textbf{(I)} Suppose that $K$ contains no $C_5$.  If $K$
also contains no $C_7$, then $K$ is perfect, so we can solve the MWSS
in $K$ by using the algorithm from~\cite{Penev}.  If $K$ contains a
$C_7$, then MWSS can be solved in time $O(|K|^3)$ as explained in
Case~1 of the proof.

\medskip

\noindent \textbf{(II)} Suppose that $K$ contains a $C_5$ of type~$2$
and no $C_5$ of type~$0$ or~$1$.  Let $h$ be a vertex of maximum score
as in Case~2 of the proof.  Then MWSS in $K$ can be solved by
successively solving the MWSS in (a) $G[K \setminus N(h)]$ and in (b)
$G[K \setminus \{h\}]$.

Step~(a) can be done as follows: If $G[K \setminus N(h)]$ contains no
$C_5$, then we are in (I).  If $G[K \setminus N(h)]$ contains a $C_5$
(of type~$2$), then by (\ref{xmax}) there is a vertex $a$ in this
$C_5$ such that $G[K \setminus (N(h)\cup N(a))]$ contains no $C_5$.
Hence we solve MWSS in (a1) $G[K \setminus (N(h)\cup N(a))]$ and in
(a2) $G[K \setminus (N(h)\cup \{a\})]$.  Step~(a1) can be done in
polynomial time by referring to (I).  Step~(a2) can be computed by
recursively calling Step~(a).  The number of recursive calls is
bounded by $|Z|$.

Step~(b) can be computed by recursively calling {(II)}.  After a
number of calls there is no longer any $C_5$ of type~$2$, so we are in
(I).  The number of recursive calls is bounded by $|H|$.

\medskip

\noindent \textbf{(III)} Suppose that $K$ contains a $C_5$ of type~$1$
and no $C_5$ of type~$0$.  Let $u$ be a vertex such that $K \setminus
N(u)$ has no $C_5$ of type~$1$, as in Claim~(\ref{u}).  Then MWSS in
$K$ can be solved by successively solving the MWSS in (a) $G[K
\setminus N(u)]$ and in (b) $G[K \setminus \{u\}]$.  Step~(a) can be
done in polynomial time by referring to (II) or (I).  Step~(b) can be
computed by recursively calling {(III)}.  After a number of calls
there is no longer any $C_5$ of type~$1$, so we are in {(II)} or (I).
The number of recursive calls is bounded by $|K|$.

\medskip

\noindent \textbf{(IV)} Suppose that $K$ contains a $C_5$ of type~$0$.
Let $T$ be the component of $G[Z]$ (unique by Claim~(\ref{z01})) that
contains a $C_5$.  Let $H_0=\{h\in H\mid$ $h$ has a neighbor in $T\}$.
Let $h$ be any vertex in $H_0$.  By (\ref{hinz0}) we know that $G[K
\setminus N(h)]$ contains no $C_5$ of type~$0$ or~$1$.  Then the MWSS
in $K$ can be solved by successively solving the MWSS in (a) $G[K
\setminus N(h)]$ and in (b) $G[K \setminus \{h\}]$.  Step~(a) can be
computed in polynomial time by calling {(II)} or (I).  Step~(b) can be
computed by recursively calling {(IV)}.  The number of recursive calls
is equal to $|H_0|$.  At the end of this step, the component $T$
becomes isolated because we have removed all vertices of $H_0$, but we
still need to solve MWSS in $T$.  This can be done as follows.
Consider any vertex $h\in H_0$.  By Claim~(\ref{z0}) every $C_5$ in
$T$ contains exactly two vertices from $N(h_0)\cap V(T)$, and these
two vertices are not adjacent.  Hence MWSS can be solved in $T$ using
the same technique as in (II) and the analogue of Claim~(\ref{xmax}).

\medskip

The total number of recursive calls is in $O(n)$ since there are three
different cycle types.  For each computation of MWSS in $K$, we end up calling
the algorithm in~\cite{Penev} which runs in $O(n^6)$.  Furthermore, at each
step we need to compute the list of all the cycles of length $5$, which takes
$O(n^5)$, but this is additive.  We need to run all the previous steps on every
connected component $K$ of the non-neighborhood of a fixed vertex of $V(G)$,
there are at most $n$ such components.  Finally, we repeat this for every vertex in
$V(G)$, so the overall complexity of our algorithm is $O(n^{9})$.  This
completes the proof of Theorem~\ref{thm:P7}.

\bigskip

One may wonder whether Claims~(\ref{xmax}) and (\ref{case2}) could be
subsumed by the following single claim: There is a vertex $x$ in $K$
such that $K\setminus N(x)$ contains no $C_5$ of any type.  Here is an
example showing that such a claim does not hold.  Let $Z$ have six
vertices $c_1,\ldots,c_5$ and $z$, such that $c_1,\ldots,c_5$ induce a
$C_5$ with edges $c_ic_{i+1}$ ($i\bmod 5$), and $z$ has no neighbor in
this $C_5$.  Let $H$ have five vertices $h_1,\ldots,h_5$ such that for
each $i$ we have $N_Z(h_i)=\{c_{i-1},c_{i+1},z\}$.  Let
$V(G)=\{c,d,h_1,\ldots,h_5,c_1,\ldots,c_5,z\}$.  It is a routine
matter to check that $G$ is $(P_7$, $K_3$)-free and that $K\setminus
N(x)$ contains a $C_5$ for every vertex $x\in K$.

\section{$(S_{1,2,3}$, bull)-free graphs}
\label{sec:S123}

\noindent{\bf Proof of Theorem~\ref{thm:S123}.} Let $G$ be a
$(S_{1,2,3},\mbox{bull})$-free graph, and let $w$ be a weight function
on the vertex set of $G$.  We proceed as in the proof of
Theorem~\ref{thm:P7}.  By Theorem~\ref{thm:LM}, we may assume that $G$
is prime, and by Lemmas~\ref{lem:wheel}---\ref{lem:nog12}, $G$
contains no wheel, no antiwheel, no umbrella, no $G_1$ and no $G_2$.
Let $c$ be any vertex of $G$, and let $K$ be an arbitrary component of
$G\setminus(\{c\}\cup N(c))$.  If $K$ is perfect, we can use the
algorithm from~\cite{Penev} to find a maximum weight stable set in
$K$.  Therefore let us assume that $K$ is not perfect.  By the Strong
Perfect Graph Theorem~\cite{CRST}, $K$ contains an odd hole or an odd
antihole.  In fact $K$ contains no antihole of length at least $6$,
for otherwise the union of such a subgraph and $c$ induces an
antiwheel.  So $K$ contains an odd hole.  We observe that:
\begin{equation}\label{ncs}
\longbox{If $C$ is a hole of length at least $5$ in $G$, and $x,y$ are
vertices in $V(G)\setminus V(C)$ such that $xy\in E(G)$ and $x$ has no
neighbor in $C$, then $N_C(y)$ is a stable set.}
\end{equation}
Proof: Let $C$ have length $\ell\ge 5$.  If $y$ has two consecutive
neighbors on $C$, then $y$ is complete to $V(C)$, for otherwise $x,y$
and three consecutive vertices of $C$ induce a bull; but then either
$V(C)\cup\{y\}$ induces a $k$-wheel, with $k \geq 6$ (if $\ell\ge 6$) or
$V(C)\cup\{x,y\}$ induces an umbrella (if $\ell=5$), a contradiction.  Thus
(\ref{ncs}) holds.

\medskip

Since $G$ is prime it is connected, so there is a neighbor $d$ of $c$
that has a neighbor in~$K$.  Let $H=N_K(d)$ and $Z=V(K)\setminus H$.
Now we claim that:
\begin{equation}\label{c75}
\longbox{$K$ contains no odd hole of length at least $7$.  Moreover,
every $C_5$ in $K$ contains one or two vertices of~$H$, and if it
contains two they are not adjacent.}
\end{equation}
Proof: Suppose that $K$ contains a hole $C$ of odd length $\ell\ge 5$,
with vertices $c_1, \ldots, c_\ell$ in order.  Since $G$ is prime, it
is connected, so there exists a path $p_0$-$p_1$-$\cdots$-$p_k$ with
$p_0\in V(C)$, $p_k=c$, and $k\ge 2$, and we choose a shortest such
path, so $p_1,\ldots,p_{k}\notin V(C)$ and $p_2,\ldots, p_k$ have no
neighbor in~$C$.  By~(\ref{ncs}) applied to $p_1,p_2$ and $C$, we know
that $N_C(p_1)$ is a stable set.  Since $\ell$ is odd, it follows that
there is an integer $i\in\{1,\ldots,\ell\}$ such that $p_1c_i\in E(G)$
and $p_1c_{i+1}, p_1c_{i+2}\notin E(G)$, and $p_1c_{i-1}\notin E(G)$,
say $i=1$.  If $\ell\ge 7$, then $p_1c_4\in E(G)$, for otherwise
$\{c_1,c_\ell, p_1,p_2,c_2,c_3,c_4\}$ induces an $S_{1,2,3}$; and then
$p_1c_5\notin E(G)$.  Then $p_1c_6\in E(G)$, for otherwise
$\{p_1,p_2,c_1,c_2,c_4,c_5,c_6\}$ induces an $S_{1,2,3}$.  But then
$\{p_1,p_2,c_6,c_5,c_1,c_2,c_3\}$ induces an $S_{1,2,3}$, a
contradiction.  This proves the first sentence of (\ref{c75}).  Now
$\ell=5$.  If $k\ge 3$, then $\{c_1,c_5,c_2,c_3,p_1,p_2,p_3\}$ induces
an $S_{1,2,3}$.  So $k=2$, and so $p_1=d$, and we already know that
$N_C(d)$ is equal to $\{c_1\}$ or $\{c_1,c_4\}$.  This proves the
second sentence of (\ref{c75}).  Thus (\ref{c75}) holds.

\medskip

For $q\in\{1,2\}$ we say that a $C_5$ in $K$ is \emph{of type~$q$} if
it contains exactly $q$ vertices from $H$.  By (\ref{c75}) and the
Strong Perfect Graph Theorem, $K$ contains a $C_5$, and every $C_5$ in
$K$ is of type~$1$ or~$2$.

For adjacent vertices $u,v$ in $Z$ we say that the edge $uv$ is
\emph{red} if there exists a $P_4$ $h'$-$u$-$v$-$h''$ for some
$h',h''\in H$.  For every vertex $h$ in $H$ we define its
\emph{score}, $sc(h)$, as the number of red edges that contain a
neighbor of $h$.

We choose a vertex $h_0\in H$ as follows: if there exists a red edge,
let $h_0$ be a vertex of maximum score in $H$; if there is no red
edge, let $h_0$ be any vertex in $H$ that has a neighbor in $Z$.  We
claim that:
\begin{equation}\label{h0}
\mbox{$K\setminus N(h_0)$ contains no $C_5$.}
\end{equation}
Proof: Suppose on the contrary that $K\setminus N(h_0)$ contains a
$C_5$ $C$.  First suppose that $C$ is of type~$1$.  So
$C=h$-$t$-$u$-$v$-$w$-$h$ for some $h\in H$ and $t,u,v,w\in Z$.  Let
$z$ be any neighbor of $h_0$ in $Z$.  By (\ref{ncs}) applied to
$h_0,z$ and $C$, we know that $N_{C}(z)$ is a stable set.  Suppose
that $zh\in E(G)$.  Then $zt, zw\notin E(G)$.  Then $z$ has a neighbor
in $\{u,v\}$, for otherwise $\{h,z,d,c,t,u,v\}$ induces an
$S_{1,2,3}$.  So, up to symmetry, $N_{C}(z)=\{h,u\}$.  But then
$\{u,t,v,w,z,h_0,d\}$ induces an $S_{1,2,3}$.  Therefore $z$ is not
adjacent to $h$.  Now $z$ has a neighbor in $\{t,u\}$, for otherwise
$\{d,c,h_0,z,h,t,u\}$ induces an $S_{1,2,3}$, and similarly $z$ has a
neighbor in $\{v,w\}$.  Since $N_{C}(z)$ is a stable set, we may
assume that $N_{C}(z)$ consists of $t$ plus one of $v,w$.  Then
$\{z,t,v,w,h_0,d,c\}$ induces an $S_{1,2,3}$, a contradiction.

\medskip

Now supppose that $C$ is of type~$2$.  So
$C=t$-$h_1$-$a$-$b$-$h_2$-$t$, for some $h_1,h_2\in H$ and $a,b,t\in
Z$.  Clearly $h_0\notin\{h_1,h_2\}$.  Note that $ab$ is a red edge.
There must be a red edge $y_1z_1$ (with $y_1,z_1\in Z$) that is
counted in $sc(h_0)$ and not in $sc(h_1)$, for otherwise we have
$sc(h_1)\ge sc(h_0)+1$ (because of $ab$), which contradicts the choice
of $h_0$.  So $h_1$ has no neighbor in $\{y_1,z_1\}$.  We may assume
that $h_0y_1\in E(G)$.  Let $h'$-$y_1$-$z_1$-$h''$ be a $P_4$ with
$h',h''\in H$.  If $h_0z_1\in E(G)$, then $h_0h'\notin E(G)$, for
otherwise $\{c,d,h',h_0,z_1\}$ induces a bull; and similarly
$h_0h''\notin E(G)$; but then $\{h',y_1,h_0,z_1,h''\}$ induces a bull.
Hence $h_0z_1\notin E(G)$.  Clearly $a\notin\{y_1,z_1\}$.  If $a$ has
no neighbor in $\{y_1,z_1\}$, then $\{d,c,h_1,a,h_0,y_1,z_1\}$ induces
an $S_{1,2,3}$; while if $a$ is complete to $\{y_1,z_1\}$, then
$\{h_0,y_1,z_1,a,h_1\}$ induces a bull.  Hence $a$ has exactly one
neighbor in $\{y_1,z_1\}$.
\\
Suppose that $a$ is adjacent to $y_1$ and not to $z_1$.  Then
$y_1b\notin E(G)$, for otherwise $\{h_0,y_1,b,a,$ $h_1\}$ induces a
bull; and $z_1b\in E(G)$, for otherwise $\{y_1,z_1,a,b,h_0,d,$ $c\}$
induces an $S_{1,2,3}$.  If $ty_1\in E(G)$, then $h_2y_1\notin E(G)$,
for otherwise $\{a,y_1,t,h_2,$ $d\}$ induces a bull; but then
$\{y_1,t,z_1,b,h_0,d,c\}$ induces an $S_{1,2,3}$.  Hence $ty_1\notin
E(G)$.  Then $tz_1\notin E(G)$, for otherwise
$\{y_1,a,z_1,t,h_0,d,c\}$ induces an $S_{1,2,3}$.  But now
$\{h_1,t,d,c,a,y_1,z_1\}$ induces an $S_{1,2,3}$, a contradiction.
\\
Therefore $a$ is adjacent to $z_1$ and not to $y_1$.  If $z_1b\in
E(G)$, then $z_1h_2\in E(G)$, for otherwise $\{h_1,a,z_1,b,h_2\}$
induces a bull; but then $\{c,d,h_1,a,b,z_1,h_2\}$ induces a $G_2$.
Hence $z_1b\notin E(G)$.  Then $y_1b\in E(G)$, for otherwise
$\{a,b,z_1,y_1,h_1,d,c\}$ induces an $S_{1,2,3}$; and $ty_1\in E(G)$,
for otherwise $\{h_1,t,d,c,a,b,y_1\}$ induces an $S_{1,2,3}$.  But
then $\{y_1,t,b,a,h_0,d,c\}$ induces an $S_{1,2,3}$.  Thus (\ref{h0})
holds.

\medskip

Finally we give the outline of the algorithm to solve MWSS in $K$.

\begin{itemize}

\item[(I)] Suppose that $K$ contains no $C_5$.  Then $K$ is perfect,
so we can compute the MWSS in $K$ by using the algorithm
from~\cite{Penev}.

\item[(II)] Suppose that $K$ contains a $C_5$.  Let $h_0$ be a vertex
of $H$ as in (\ref{h0}).  Then MWSS in $K$ can be solved by
successively solving MWSS in (a) $K\setminus{N(h_0)}$ and in (b) $K
\setminus \{h_0\}$.  Step~(a) can be done in polynomial time by
referring to (I) since $K\setminus{N(h_0)}$ is perfect by (\ref{c75})
and (\ref{h0}).  Step~(b) can be computed by recursively calling (II).
The number of recursive calls is bounded by~$|H|$.
\end{itemize}

For each computation of MWSS in $K$, we end up calling the algorithm
in~\cite{Penev} which runs in $O(n^6)$.  Furthermore, at each step we need to
compute the list of all the cycles of length $5$, which takes $O(n^5)$, but
this is additive. 
We need to run all the previous steps on every connected component $K$ of the
non-neighborhood of a fixed vertex of $V(G)$, there are at most $n$ such components.
Finally, we repeat this for every vertex in $V(G)$, so the
overall complexity of our algorithm is $O(n^{8})$. 
This completes the proof of Theorem~\ref{thm:S123}.

\section{Concluding remarks}

The technique used here is essentially that which was developed by
Brandst\"adt and Mosca in \cite{BM}.  The new results that it has
enabled us to establish are generalizations of results \cite{KM2,MP}
that were obtained earlier using different techniques.  It is not
clear to us if the same ideas can be used to solve MWSS in other
classes of graphs, such as ($S_{2,2,2}$, bull)-free graphs or ($P_8$,
bull)-free graphs or similar classes (not necessarily subclasses of
bull-free graphs).  These may be interesting open problems.

\clearpage

\end{document}